\font \sevenrm=cmr7
\font \fiverm=cmr5
\newcommand{\nc}{\newcommand}
\newtheorem{theorem}{Theorem}
\newtheorem{definition}{Definition}
\newtheorem{proposition}{Proposition}
\newtheorem{ex}{Example}
\newtheorem{remark}{Remark}
\nc{\comment}[1]{[[{\tt #1}]] }
\nc{\Cal}[1]{{\mathcal {#1}}}
\nc{\mop}[1]{\mathop{\hbox {\rm #1} }\nolimits}
\nc{\gmop}[1]{\mathop{\hbox {\bf #1} }\nolimits}
\def\starg{{\displaystyle\mathop{\star}\limits}_g}
\nc{\smop}[1]{\mathop{\hbox {\sevenrm #1} }\nolimits}
\nc{\ssmop}[1]{\mathop{\hbox {\fiverm #1} }\nolimits}
\nc{\mopl}[1]{\mathop{\hbox {\rm #1} }\limits}
\nc{\smopl}[1]{\mathop{\hbox {\sevenrm #1} }\limits}
\nc{\ssmopl}[1]{\mathop{\hbox {\fiverm #1} }\limits}
\nc{\frakg}{{\frak g}}
\nc{\g}[1]{{\frak {#1}}}
\def \restr#1{\mathstrut_{\textstyle |}\raise-6pt\hbox{$\scriptstyle #1$}}
\def \srestr#1{\mathstrut_{\scriptstyle |}\hbox to
-1.5pt{}\raise-4pt\hbox{$\scriptscriptstyle #1$}}
\nc{\wt}{\widetilde} \nc{\wh}{\widehat}
\nc{\redtext}[1]{\textcolor{red}{#1}}
\nc{\bluetext}[1]{\textcolor{blue}{#1}}
\nc\fleche[1]{\mathop{\hbox to #1 mm{\rightarrowfill}}\limits}
\nc{\ignore}[1]{}
\def\semi{\mathrel{\times}\kern -.85pt\joinrel\mathrel{\raise
1.4pt\hbox{${\scriptscriptstyle |}$}}}
\nc\R{{\mathbb R}}
\nc\N{{\mathbb N}}
\nc\inver{^{-1}}
\nc\point{\hbox{\bf .}}
\nc\un{\hbox{\bf 1}}
\def\pre3{\,{\scalebox{0.20}{  
\begin{picture}(226,162) (319,-80)
    \SetWidth{3.0}
    \SetColor{Black}
    \Arc(432,-13.962)(57.658,123.711,416.289)
    \Line(320,-14)(374,-15)
    \Line(490,-14)(544,-14)
    \Arc(432,34)(32,180,540)
    \Line(425.001,62.001)(434.999,69.999)\Line(426.001,70.999)(433.999,61.001)
    \Line(430.001,-5.999)(437.999,7.999)\Line(427.001,4.999)(440.999,-2.999)
    \Line(435.001,-78.999)(444.999,-65.001)\Line(433.001,-67.001)(446.999,-76.999)
    \Text(440,75)[lb]{\Huge{\Black{$0$}}}
    \Text(445,-15)[lb]{\Huge{\Black{$1$}}}
    \Text(445,-91)[lb]{\Huge{\Black{$0$}}}
  \end{picture}}}\,}
\def\racine{{\scalebox{0.3}{ %%%%%%%%%%%%%%%%%%%%%%%%%%%%%%%%%\ta1
\begin{picture}(12,12)(38,-38)
\SetWidth{0.5} \SetColor{Black} \Vertex(45,-33){5.66}
\end{picture}}}}
\def\arbreca{\,{\scalebox{0.15}{
\begin{picture}(8,156) (370,-147)
\SetWidth{2}
\SetColor{Black}
\Line(374,-143)(374,-99)
\Vertex(374,-96){9}
\Vertex(375,-144){12}
\Line(374,-92)(374,-48)
\Vertex(374,-45){9}
\Line(374,-42)(374,2)
\Vertex(374,5){9}
\end{picture}
}}\,}
\def\arbrecc{\,{\scalebox{0.15}{
\begin{picture}(48,98) (349,-205)
\SetWidth{2}
\SetColor{Black}
\Vertex(375,-202){12}
\Line(376,-200)(395,-165)
\Line(373,-201)(354,-164)
\Vertex(353,-161){9}
\Vertex(395,-163){9}
\Line(353,-160)(353,-113)
\Vertex(353,-111){9}
\end{picture}
}}\,}
\def\arbrecd{\,{\scalebox{0.15}{
\begin{picture}(48,52) (349,-251)
\SetWidth{2}
\SetColor{Black}
\Vertex(375,-248){12}
\Line(376,-246)(395,-211)
\Line(373,-247)(354,-210)
\Vertex(353,-207){9}
\Vertex(395,-209){9}
\Line(375,-247)(375,-206)
\Vertex(376,-203){9}
\end{picture}
}}\,}
\def\arbreda{\,{\scalebox{0.15}{
\begin{picture}(8,204) (370,-99)
\SetWidth{2}
\SetColor{Black}
\Line(374,-95)(374,-51)
\Vertex(374,-48){9}
\Vertex(375,-96){12}
\Line(374,-44)(374,0)
\Vertex(374,3){9}
\Line(374,6)(374,50)
\Vertex(374,53){9}
\Line(374,53)(374,98)
\Vertex(374,101){9}
\end{picture}
}}\,}
\def\arbredc{\,{\scalebox{0.15}{
\begin{picture}(48,150) (349,-205)
\SetWidth{2}
\SetColor{Black}
\Line(376,-148)(395,-113)
\Line(373,-149)(354,-112)
\Vertex(353,-109){9}
\Vertex(395,-111){9}
\Line(353,-108)(353,-61)
\Vertex(353,-59){9}
\Line(374,-200)(374,-153)
\Vertex(374,-149){9}
\Vertex(374,-202){12}
\end{picture}
}}\,}
\def\arbrede{\,{\scalebox{0.15}{
\begin{picture}(48,153) (349,-150)
\SetWidth{2}
\SetColor{Black}
\Vertex(375,-147){12}
\Line(376,-145)(395,-110)
\Line(373,-146)(354,-109)
\Vertex(353,-106){9}
\Vertex(395,-108){9}
\Line(353,-105)(353,-58)
\Vertex(353,-56){9}
\Line(353,-52)(353,-5)
\Vertex(353,-1){9}
\end{picture}
}}\,}
\def\arbredf{\,{\scalebox{0.15}{
\begin{picture}(48,98) (349,-205)
\SetWidth{2}
\SetColor{Black}
\Vertex(375,-202){12}
\Line(376,-200)(395,-165)
\Line(373,-201)(354,-164)
\Vertex(353,-161){9}
\Vertex(395,-163){9}
\Line(353,-160)(353,-113)
\Vertex(353,-111){9}
\Line(395,-159)(395,-112)
\Vertex(395,-111){9}
\end{picture}
}}\,}
\def\arbredg{\,{\scalebox{0.15}{
\begin{picture}(48,98) (349,-205)
\SetWidth{2}
\SetColor{Black}
\Vertex(375,-202){12}
\Line(376,-200)(395,-165)
\Line(373,-201)(354,-164)
\Vertex(353,-161){9}
\Vertex(395,-163){9}
\Line(375,-201)(375,-160)
\Vertex(376,-157){9}
\Vertex(376,-111){9}
\Line(375,-155)(375,-114)
\end{picture}
}}\,}
\def\racine{\,{\scalebox{0.07}{
\begin{picture}(29,29) (360,-285)
    \SetWidth{6}
    \SetColor{Black}
    \Vertex(375,-271){20}
  \end{picture}
  }}\,}
\def\echela{\,{\scalebox{0.07}{
\begin{picture}(33,116) (353,-443)
    \SetWidth{6}
    \SetColor{Black}
    \Vertex(369,-428){20}
    \Vertex(369,-341){16}
    \Line(369,-341)(369,-419)
  \end{picture}
  }}\,}
  \def\echelb{\,{\scalebox{0.07}{
   \begin{picture}(33,195) (351,-363)
    \SetWidth{6}
    \SetColor{Black}
    \Vertex(369,-262){16}
    \Line(369,-262)(369,-340)
    \Vertex(368,-348){20}
    \Line(369,-184)(369,-262)
    \Vertex(369,-182){16}
  \end{picture}
  }}\,}
  \def\arbrey{\,{\scalebox{0.07}{
  \begin{picture}(147,114) (299,-444)
    \SetWidth{6}
    \SetColor{Black}
    \Vertex(368,-429){20}
    \Vertex(313,-344){16}
    \Vertex(434,-348){16}
    \Line(312,-344)(368,-428)
    \Line(433,-348)(372,-428)
  \end{picture}
}}\,}
\def\arbreza{\,{\scalebox{0.07}{
\begin{picture}(147,200) (299,-358)
    \SetWidth{6}
    \SetColor{Black}
    \Vertex(368,-343){20}
    \Vertex(313,-258){16}
    \Vertex(434,-262){16}
    \Line(312,-258)(368,-342)
    \Line(433,-262)(372,-342)
    \Vertex(313,-172){16}
    \Line(311,-179)(311,-252)
  \end{picture}
}}\,}
  \def\arbrema{\,{\scalebox{0.07}{
   \begin{picture}(149,215) (296,-345)
    \SetWidth{6}
    \SetColor{Black}
    \Line(370,-244)(370,-317)
    \Vertex(371,-238){16}
    \Line(310,-149)(366,-233)
    \Line(432,-150)(372,-234)
    \Vertex(310,-144){16}
    \Vertex(433,-144){16}
    \Vertex(371,-330){20}
  \end{picture}
  }}\,}
  \def\arbremg{\,{\scalebox{0.07}{
  \begin{picture}(244,379) (201,-181)
    \SetWidth{6}
    \SetColor{Black}
    \Line(370,-80)(370,-153)
    \Vertex(371,-74){16}
    \Line(310,15)(366,-69)
    \Line(432,14)(372,-70)
    \Vertex(310,20){16}
    \Vertex(433,20){16}
    \Vertex(371,-166){20}
    \Vertex(264,98){16}
    \Vertex(359,100){16}
    \Vertex(215,183){16}
    \Vertex(302,184){16}
    \Line(359,99)(315,26)
    \Line(299,175)(265,103)
    \Line(214,182)(259,105)
    \Line(267,91)(308,25)
  \end{picture}
  }}\,}
   \def\arbremi{\,{\scalebox{0.07}{
  \begin{picture}(205,388) (240,-172)
    \SetWidth{6}
    \SetColor{Black}
    \Line(370,-71)(370,-144)
    \Vertex(371,-65){16}
    \Line(310,24)(366,-60)
    \Line(432,23)(372,-61)
    \Vertex(310,29){16}
    \Vertex(433,29){16}
    \Vertex(371,-157){20}
    \Line(310,108)(310,35)
    \Vertex(309,118){16}
    \Line(374,207)(314,123)
    \Vertex(369,202){16}
    \Line(250,207)(306,123)
    \Vertex(254,202){16}
    \Line(434,109)(434,36)
    \Vertex(433,115){16}
  \end{picture}
  }}\,}
   \def\arbremj{\,{\scalebox{0.07}{
  \begin{picture}(235,388) (240,-172)
    \SetWidth{6}
    \SetColor{Black}
    \Line(370,-71)(370,-144)
    \Vertex(371,-65){16}
    \Line(310,24)(366,-60)
    \Line(432,23)(372,-61)
    \Vertex(310,29){16}
    \Vertex(433,29){16}
    \Vertex(371,-157){20}
    \Line(310,108)(310,35)
    \Vertex(309,118){16}
    \Line(374,207)(314,123)
    \Vertex(369,202){16}
    \Line(250,207)(306,123)
    \Vertex(254,202){16}
    \Vertex(463,-68){16}
    \Line(459,-73)(379,-153)
  \end{picture}
  }}\,}
\def\arbreml{\,{\scalebox{0.07}{
\begin{picture}(205,476) (240,-84)
    \SetWidth{6}
    \SetColor{Black}
    \Line(370,17)(370,-56)
    \Vertex(371,23){16}
    \Vertex(371,-69){20}
    \Line(312,286)(312,213)
    \Vertex(312,292){16}
    \Line(256,375)(312,291)
    \Vertex(254,378){16}
    \Line(377,379)(317,295)
    \Vertex(375,376){16}
    \Line(434,111)(374,27)
    \Line(312,107)(368,23)
    \Vertex(433,113){16}
    \Vertex(311,111){16}
    \Line(312,197)(312,124)
    \Vertex(312,202){16}
  \end{picture}
  }}\,}
\def\arbremo{\,{\scalebox{0.07}{
 \begin{picture}(205,307) (243,-253)
    \SetWidth{6}
    \SetColor{Black}
    \Line(370,-152)(370,-225)
    \Vertex(371,-146){16}
    \Line(310,-57)(366,-141)
    \Line(432,-58)(372,-142)
    \Vertex(310,-52){16}
    \Vertex(433,-52){16}
    \Vertex(371,-238){20}
    \Line(371,-64)(371,-137)
    \Vertex(371,-51){16}
    \Line(438,38)(378,-46)
    \Vertex(436,40){16}
    \Vertex(257,40){16}
    \Line(258,32)(308,-52)
    \Line(328,34)(370,-43)
    \Vertex(328,40){16}
  \end{picture}
   }}\,}
 \def\arbremr{\,{\scalebox{0.07}{
 \begin{picture}(154,306) (296,-254)
    \SetWidth{6}
    \SetColor{Black}
    \Line(370,-153)(370,-226)
    \Vertex(371,-147){16}
    \Line(310,-58)(366,-142)
    \Line(432,-59)(372,-143)
    \Vertex(310,-53){16}
    \Vertex(433,-53){16}
    \Vertex(371,-239){20}
    \Line(371,-65)(371,-138)
    \Vertex(371,-52){16}
    \Line(437,40)(377,-44)
    \Line(311,38)(367,-46)
    \Vertex(311,38){16}
    \Vertex(433,37){16}
    \Line(437,-149)(377,-233)
    \Vertex(438,-149){16}
  \end{picture}
  }}\,}
\def\arbremt{\,{\scalebox{0.07}{
\begin{picture}(275,286) (234,-274)
    \SetWidth{6}
    \SetColor{Black}
    \Vertex(371,-259){20}
    \Line(309,-169)(365,-253)
    \Line(439,-165)(379,-249)
    \Vertex(437,-165){16}
    \Vertex(306,-167){16}
    \Line(245,-76)(301,-160)
    \Line(502,-76)(442,-160)
    \Line(422,-87)(436,-162)
    \Vertex(321,-75){16}
    \Vertex(497,-81){16}
    \Vertex(248,-81){16}
    \Vertex(420,-75){16}
    \Line(319,-83)(306,-157)
    \Line(249,-6)(249,-74)
    \Vertex(250,-2){16}
  \end{picture}
}}\,}
   \def\arbremv{\,{\scalebox{0.07}{
  \begin{picture}(275,208) (234,-352)
    \SetWidth{6}
    \SetColor{Black}
    \Vertex(371,-337){20}
    \Line(309,-247)(365,-331)
    \Line(439,-243)(379,-327)
    \Vertex(437,-243){16}
    \Vertex(306,-245){16}
    \Line(245,-154)(301,-238)
    \Line(502,-154)(442,-238)
    \Line(422,-165)(436,-240)
    \Vertex(497,-159){16}
    \Vertex(248,-159){16}
    \Vertex(421,-158){16}
    \Line(371,-248)(371,-328)
    \Vertex(371,-241){16}
    \Line(338,-162)(307,-241)
    \Vertex(336,-159){16}
  \end{picture}
  }}\,}
   \def\arbremw{\,{\scalebox{0.07}{
  \begin{picture}(309,284) (200,-276)
    \SetWidth{6}
    \SetColor{Black}
    \Vertex(371,-261){20}
    \Line(309,-171)(365,-255)
    \Line(439,-167)(379,-251)
    \Vertex(437,-167){16}
    \Vertex(306,-169){16}
    \Line(245,-78)(301,-162)
    \Line(502,-78)(442,-162)
    \Line(422,-89)(436,-164)
    \Vertex(497,-83){16}
    \Vertex(248,-83){16}
    \Vertex(420,-77){16}
    \Vertex(214,-7){16}
    \Vertex(286,-6){16}
    \Line(216,-13)(244,-80)
    \Line(284,-12)(253,-79)
  \end{picture}
  }}\,}
  \def\arbreha{\,{\scalebox{0.07}{
   \begin{picture}(149,299) (296,-261)
    \SetWidth{6}
    \SetColor{Black}
    \Vertex(371,-154){16}
    \Line(310,-65)(366,-149)
    \Line(432,-66)(372,-150)
    \Vertex(310,-60){16}
    \Vertex(433,-60){16}
    \Vertex(371,-246){20}
    \Line(310,16)(310,-57)
    \Line(371,-163)(371,-236)
    \Vertex(311,24){16}
  \end{picture}
  }}\,}
  \def\arbrehd{\,{\scalebox{0.07}{
   \begin{picture}(158,215) (296,-345)
    \SetWidth{6}
    \SetColor{Black}
    \Vertex(371,-238){16}
    \Line(310,-149)(366,-233)
    \Line(432,-150)(372,-234)
    \Vertex(310,-144){16}
    \Vertex(433,-144){16}
    \Vertex(371,-330){20}
    \Line(371,-247)(371,-320)
    \Vertex(442,-247){16}
    \Line(440,-251)(376,-327)
  \end{picture}
  }}\,}
\def\diagramme #1{\vskip 4mm \centerline {#1} \vskip 4mm}
\begin{document}
%%%%%%%%%%%%%%%%%%%%%%%%%%%%%%%%%%%%%%%%%%%%%
%%%%%%%%%%%%%%%%%%%%%%%%%%%%%%%%%%%%%%%%%%%%%%%%%%%%%%%%%%%%%%%%%%%%%%%%%%%%%%%%%%%%%%%%%%
%%%%%%%%%%%%%%%%%%%%%%%%%%%%%%%%%%%%%%%%%%%%%
%%%%%%%%%%%%%%%%%%%%%%% %%%%%%%%%%%%%%%%%%%%%%
%%%%%%%%%%%%%%%%%%%%%%%%%%%%%%%%%%%%%%%%%%%%%
\title{
{Doubling pre-Lie algebra of rooted trees}}

\author{Mohamed Belhaj Mohamed}
\address{{Mathematics Departement, Sciences college, Taibah University, Kingdom of Saudi Arabia~.}\vspace{0.01cm}
{Laboratoire de math\'ematiques physique fonctions sp\'eciales et applications, Universit\'e de Sousse, rue Lamine Abassi 4011 H. Sousse,  Tunisie.}}     
        \email{mohamed.belhajmohamed@isimg.tn}

%%%%%%%%%%%%%%%%%%%%%%%%%%%%%%%%%%%%%%%%%%%%%%%%%%%%%%%%%%%%%%%%%%%
\date{September 2019}
%%%%%%%%%%%%%%%%%%%%%%%%%%%%%%%%%%%%%%%%%%%%%%%%%%%%%%%%%%%%%%%%%%%%%%%%%%%%%%%%%%%%%%%%%%%%%%%%%%%%%%%%%%%%%%%%%%%%%%%%%%%%%%%%%%%%%%
\noindent{\footnotesize{${}\phantom{a}$ }}
%%%%%%%%%%%%%%%%%%%%%%%%%%%%%%%%%%%%%%%%%%%%%%%%%%%%%%%%%%%%%%%%%%%
%%%%%%%%%%%%%%%%%%%%%%%%%%%%%%%%%%%%%%%%%%%%%%%%%%%%%%%%%%%%%%%%%%%
\begin{abstract}
We study the pre-Lie algebra of rooted trees $(\Cal T, \rightarrow)$ and we define a pre-Lie structure on its doubling space $(V, \leadsto)$. Also, we find the enveloping algebras of the two pre-Lie algebras denoted respectively by $(\Cal H', \star, \Gamma)$ and $(\Cal D', \bigstar, \chi)$. We prove that $(\Cal D', \bigstar, \chi)$ is a module-bialgebra on $(\Cal H', \star, \Gamma)$ and we find some relations between the two pre-Lie structures.
\end{abstract}
%\newpage 
\maketitle
\textbf{MSC Classification}: 05C90, 81T15, 16T05, 16T10.

\textbf{Keywords}: Rooted trees, Bialgebra, Hopf algebra, pre-Lie algbera, Enveloping algebra, Module-bialgebra.
\tableofcontents
%%%%%%%%%%%%%%%%%%%%%%%%%%%%%%%%%%%%%%%%%%%%%%%%%%%%%%%%%%%%%%%%%%%
%%%%%%%%%%%%%%%%%%%%%%%%%%%%%%%%%%%%%%%%%%%%%%%%%%%%%%%%%%%%%%%%%%%
%%%%%%%%%%%%%%%%%%%%%%%%%%%%%%%%%%%%%%%%%%%%%%%%%%%%%%%%%%%%%%%%%%%
\section{Introduction}  
%%%%%%%%%%%%%%%%%%%%%%%%%%%%%%%%%%%%%%%%%%%%%%%%%%%%%%%%%%%%%%%%
 Rooted trees appeared in the work of Cayley \cite{ca} in the context of differential equations, Butcher \cite{bu}, Grossman and Larson \cite{gl}, Munthe-Kaas and Wright \cite{mw} in the field of numerical analysis. They are used in the context of renormalization in perturbative quantum field theory in the works of A. Connes and D. Kreimer \cite{A.D2000, ad98, DK98}, D. Calaque, K. Ebrahimi-Fard, D. Manchon \cite{ckm, ms} and L. Foissy \cite{lf}.

In 1998, A. Connes and D. Kreimer \cite{ad98, DK98} showed that $\Cal{H} = S({\Cal T})$, where $\Cal T$ be the vector space spanned by  rooted trees,  admits a structure of graded Hopf algebra. The product is the concatenation $m$ , and the coproduct is defined by:
\begin{eqnarray*}
\Delta (t) &=&\sum_{c \in {\mop{\tiny{Adm}}(t)}} P^c(t)\otimes R^c(t),
\end{eqnarray*}
where $\mop{Adm}(t)$ is the set of admissible cuts of a forest. He used this structure to solve some problems related to the renormalization of quantum field theory.

Grafting pre-Lie algebra of rooted trees was  studied for the first time by F. Chapoton and M. Livernet \cite{cha} as being the space of primitive elements in the graded dual of the Hopf algebra of rooted trees. They used this structure to give a combinatorial description of the pre-Lie operad in terms of rooted trees.

In joint work with Dominique Manchon \cite{DB}, we have studied the doubling bialgebra in the context of rooted trees. We have defined the doubling bialgebras of rooted trees given by extraction contraction and admissible cuts, and we have shown the existence of many relations between these two structures.

In this article, we start by defining the enveloping algebra of grafting pre-Lie algebra of rooted trees using the method of Oudam and Guin \cite{og}. We consider the Hopf symmetric algebra $\Cal H' : = \Cal S (\Cal {T})$ of the pre-Lie algebra $(\Cal {T}, \rightarrow)$, equipped with its usual unshuffling coproduct $\Gamma$ and a product $\star$ defined on $\Cal H'$ by:
\begin{eqnarray*}
t \star t' &=& \sum_{(t)}  t^{(1)} (t^{(2)} \rightarrow t').
\end{eqnarray*}
We find then a comodule-coalgebra structure connecting this last Hopf algebra and the Hopf algebra of Connes-Kreimer.

Secondly, we define a pre-Lie structure $(V, \leadsto)$ on the doubling space of rooted trees. The product $\leadsto$ is defined by:
$$(t_1, s_1)  \leadsto (t_2, s_2) :=  \sum_{v \in \Cal V(t_2 - s_2)}  (t_1 \rightarrow_v t_2 , s_1  s_2),$$
and we construct $(\Cal D', \bigstar, \chi)$ the enveloping algebra of $(V, \leadsto)$, where $\chi$ is the usual unshuffling coproduct  and $\bigstar$ is defined by: 
\begin{eqnarray*}
(t, s) \bigstar (t', s') &=& \sum_{(t, s)}  (t, s)^{(1)} \big((t, s)^{(2)} \leadsto  (t', s')\big).
\end{eqnarray*}

We show that $(\Cal D', m, \chi)$ is a comodule-coalgebra on $(\Cal D,  m, \Delta)$, and in the last section we give some relations between the two structures.  We prove that $(V, \Diamond)$ is a left pre-Lie module on ${\Cal T}$, and we prove taht $(\Cal D', \bigstar, \chi)$ is a module-bialgebra on $(\Cal H', \star, \Gamma)$. 

The Connes-Kreimer Hopf algebra of rooted trees and the enveloping algebra of the pre-Lie algebra of rooted trees are two important examples of Hopf algebras, so it is necessary to find relationships that connect the two structures. Our present work, which relates these two Hopf algebras and their  doubling structures, will help us to find other combinatorial results as well as others related to renormalization in quantum field theory.

\vspace{1cm}
\noindent
{\bf Acknowledgements:} I would like to thank Dominique Manchon for his support and his valuable comments.
%%%%%%%%%%%%%%%%%%%%%%%%%%%%%%%%%%%%%%%%%%%%%%%%%%%%%%%%%%%%%%%%
%%%%%%%%%%%%%%%%%%%%%%%%%%%%%%%%%%%%%%%%%%%%%%%%%%%%%%%%%%%%%%%%%%%
%%%%%%%%%%%%%%%%%%%%%%%%%%%%%%%%%%%%%%%%%%%%%%%%%%%%%%%%%%%%%%%%

%%%%%%%%%%%%%%%%%%%%%%%%%%%%%%%%%%%%%%%%%%%%%%%%%%%%%%%%%%%%%%%%
%%%%%%%%%%%%%%%%%%%%%%%%%%%%%%%%%%%%%%%%%%%%%%%%%%%%%%%%%%%%%%%%%%%

%%%%%%%%%%%%%%%%%%%%%%%%%%%%%%%%%%%%%%%%%%%%%%%%%%%%%%%%%%%%%%%%%%%
%%%%%%%%%%%%%%%%%%%%%%%%%%%%%%%%%%%%%%%%%%%%%%%%%%%%%%%%%%%%%%%%%%%
%%%%%%%%%%%%%%%%%%%%%%%%%%%%%%%%%%%%%%%%%%%%%%%%%%%%%%%%%%%%%%%%%%%
%%%%%%%%%%%%%%%%%%%%%%%%%%%%%%%%%%%%%%%%%%%%%%%%%%%%%%%%%%%%%%%%%%%
\section{Bialgebra and doubling bialgebra of rooted trees}
%%%%%%%%%%%%%%%%%%%%%%%%%%%%%%%%%%%%%%%%%%%%%%%%%%%%%%%%%%%%%%%%%%%%%%%%%%%%%%%
%%%%%%%%%%%%%%%%%%%%%%%%%%%%%%%%%%%%%%%%%%%%%%%%%%%%%%%%%%%%%%%%%%%%%%%%%%%%%%%
%%%%%%%%%%%%%%%%%%%%%%%%%%%%%%%%%%%%%%%%%%%%%%%%%%%%%%%%%%%%%%%%%%%
%%%%%%%%%%%%%%%%%%%%%%%%%%%%%%%%%%%%%%%%%%%%%%%%%%%%%%%%%%%%%%%%%%%
%%%%%%%%%%%%%%%%%%%%%%%%%%%%%%%%%%%%%%%%%%%%%%%%%%%%%%%%%%%%%%%%%%%%%%%%%%%%%%%
%%%%%%%%%%%%%%%%%%%%%%%%%%%%%%%%%%%%%%%%%%%%%%%%%%%%%%%%%%%%%%%%%%%%%%%%%%%%%%%
A {\sl rooted tree\/} is a finite connected simply connected oriented graph such that every vertex has exactly one incoming edge, except for a distinguished vertex (the root) which has no incoming edge.
The set of rooted trees is denoted by $T$ and the set of rooted trees with $n$ vertices is denoted by $T_n$.   
\begin{ex}
\begin{eqnarray*}
T_1 &=& \{\racine\}\\
T_2 &=& \{\echela\}\\
T_3 &=& \{\echelb , \arbrey \}\\
T_4 &=& \{\arbreca, \arbrema, \arbreza, \arbrecd\}
\end{eqnarray*}
\end{ex}
Let $\Cal T$ the vector space spanned by the elements of $T$ and let $\Cal{H} = S({\Cal T})$ be the algebra of rooted trees. A. Connes and D. Kreimer \cite{ad98, DK98} showed that this space, graded according to the number of vertices, admits a structure of graded bialgebra. The product is the disjoint union, and the coproduct is defined by:
\begin{eqnarray*}
\Delta (t) &=& t \otimes \un + \un \otimes t + \sum_{c \in {\mop{\tiny{Adm'}}(t)}} P^c(t)\otimes R^c(t)\\
&=&\sum_{c \in {\mop{\tiny{Adm}}(t)}} P^c(t)\otimes R^c(t),
\end{eqnarray*}
where $\mop{Adm}(t)$ (resp $\mop{Adm'}(t)$) is the set of admissible cuts (resp. nontrivial admissible cuts) of a forest, i.e. the set of collections of edges such that any path from the root to a leaf contains at most one edge of the collection. We denote as usual by $P^c(t)$ (resp. $R^c(t)$) the pruning (resp. the trunk) of $t$, i.e. the subforest formed by the edges above the cut $c \in \mop{Adm}(t)$ (resp. the subforest formed by the edges under the cut). Note that the trunk of a tree is a tree, but the pruning of a tree may be a forest. We denote by $\un$ the empty forest, which is the unit. One sees easily that $\deg(t) = \deg(P_c(t)) + \deg(R_c(t))$ for all admissible cuts. (See \cite{ckm} and \cite{lf}).

\begin{ex}
$$
\Delta (\arbreza)=\un \otimes\arbreza+\arbreza\otimes
\un +\racine\otimes\echelb+\echela\otimes\echela+\racine\otimes\arbrey+\echela\racine\otimes\racine
+\racine\racine\otimes\echela.
$$
\end{ex}

\vspace{0.5cm}
Let ${V}$ the vector space spanned by the couples $(t,s)$ where $t$ is a tree and $s =P^{c_0}(t)$ where $c_0$ is an admissible cut of $t$. We define then the doubling bialgebra of trees ${\Cal D} : = S({V})$, the product is given by:
$$m\big((t, s) \otimes (t', s')\big) = (t, s)(t', s') = (tt', ss'),$$
the coproduct $\Delta$ is defined for all $(t,s) \in \Cal D$ by: 
$$\Delta(t,s) =  \sum_{c \in {\mop{\tiny{Adm}}(s)}} \big(t, P^c(s)\big)\otimes \big( R^c(t), R^c(s)\big),$$
the unit $\textbf{1}$ is identified to empty forest, the counit $\varepsilon$ is given by $\varepsilon (t, s) = \varepsilon (s)$ and the graduation is given by the number of vertices of $s$:
$$ |(t, s)|  = |s|.$$
\begin{theorem}\cite{DB}
$D$ is a graded bialgebra.
\end{theorem}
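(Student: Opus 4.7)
The plan is to verify the bialgebra axioms for $\Cal D$: well-definedness of $\Delta$ on $V$, coassociativity, the counit axiom, and compatibility with the grading. Since both $\Delta$ and $\varepsilon$ are extended from $V$ to $\Cal D = S(V)$ as algebra morphisms, their multiplicativity is built in, and the grading $|(t,s)| = |s|$ is multiplicative by construction of the product.

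I would first check well-definedness of $\Delta$ by showing that every summand of $\Delta(t,s)$ lies in $V \otimes V$. Given $(t,s) \in V$ with $s = P^{c_0}(t)$ and $c \in \mop{Adm}(s)$, the edges of $s$ embed canonically into the edges of $t$, so $c$ can be regarded as an admissible cut on $t$; moreover $c \cup c_0$ is also admissible on $t$ with pruning equal to $P^c(s)$, placing $(t, P^c(s))$ in $V$. Similarly $R^c(s)$ is obtained by pruning $R^c(t)$ along the restriction of $c_0$ to the trunk, so $(R^c(t), R^c(s)) \in V$. Next I would prove coassociativity by expanding $(\Delta \otimes \mop{id})\circ\Delta(t,s)$ and $(\mop{id} \otimes \Delta)\circ\Delta(t,s)$ and matching them term-by-term. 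Each becomes a triple sum indexed by pairs of nested admissible cuts of $s$: pairs $(c, c')$ with $c \in \mop{Adm}(s)$, $c' \in \mop{Adm}(P^c(s))$ on one side, and pairs $(c, c'')$ with $c'' \in \mop{Adm}(R^c(s))$ on the other. These two indexing sets are in canonical bijection, both encoding two levels of cut on $s$, and under this bijection the classical Connes--Kreimer identities $P^{c'}\circ P^c = P^{c_{\text{tot}}}$, $R^{c'}\circ P^c = P^{c''}\circ R^c$, and $R^{c''}\circ R^c = R^{c_{\text{tot}}}$ match the second coordinates, while the analogous lifted-cut identities on $t$ match the first coordinates. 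The counit axiom then follows by inspection: only the trivial cut $c=\emptyset$ contributes to $(\varepsilon \otimes \mop{id})\Delta(t,s)$, giving $(t,s)$, and symmetrically for the other side via the full cut. Gradedness is immediate from $|P^c(s)| + |R^c(s)| = |s|$ for every admissible cut.

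The main obstacle will be the coassociativity step, specifically the precise formulation of the ``lifting'' of a cut on $s$ to a cut on $t$ through the datum $s = P^{c_0}(t)$; one must verify that the restriction and union operations on admissible cuts behave compatibly on both coordinates. Once this identification is pinned down, the proof reduces to running the classical Connes--Kreimer coassociativity argument simultaneously on each coordinate of $V$, which is established work.
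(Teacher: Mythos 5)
Your overall strategy is the natural one and matches what the paper does: the theorem itself is quoted from \cite{DB}, and the only ingredient the paper spells out is the stability $\Delta(V)\subset V\otimes V$ (the Remark following the theorem), after which coassociativity, the counit axiom, multiplicativity and the grading reduce to running the classical Connes--Kreimer cut identities simultaneously on both coordinates of $V$, exactly as you outline.

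There is, however, one step that fails as written: the claim that $c\cup c_0$ is an admissible cut of $t$ with pruning $P^c(s)$. If $e\in c$ is an edge of $s=P^{c_0}(t)$, the component of $s$ containing $e$ sits above some edge $e_0\in c_0$, so the root-to-leaf path of $t$ through $e$ also contains $e_0$; hence $c\cup c_0$ places two edges on a single path and is not admissible (for nonempty $c$ and $c_0$). Moreover, even disregarding admissibility, the subforest of $t$ lying above some edge of $c\cup c_0$ is all of $s$, not $P^c(s)$. The correct and simpler observation --- the one the paper's Remark relies on --- is that $c$ itself, transported to $t$ along the inclusion of $s$ into $t$, is already an admissible cut of $t$ whose pruning equals $P^c(s)$: the components of $s$ are full subtrees of $t$, so everything in $t$ lying above an edge of $c$ already lies in $s$. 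With that repair the remainder of your argument (the nested-cut bijection for coassociativity on both coordinates, the counit computation via the empty and total cuts, and $|P^c(s)|+|R^c(s)|=|s|$ for the grading) goes through.
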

\begin{remark}
We remark here that $\Delta (V) \subset V \otimes V$. Indeed, if $(t, s) \in V$ then $\big(t,P^{c}(s)\big) \in V$, since a pruning of $s$ is also a pruning of $t$. Similary $\big( R^c(t), R^c(s)\big) \in V$ because $R^c(t)$ is a tree, and $R^c(s) = R^c(P^{c_0} (t)) = P^{c_0} (R^c(t))$ is a pruning of $R^c(t)$. So we can restrict the coassociative coproduct $\Delta$ to $V$.  
\end{remark}
\begin{proposition}\label{p1} The second projection 
\begin{eqnarray*}
P_2:  \Cal D &\longrightarrow& {{\Cal H}} \\ (t, s) &\longmapsto& s
\end{eqnarray*} 
is a bialgebra morphism.
\end{proposition}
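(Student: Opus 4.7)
The plan is to check the four defining properties of a bialgebra morphism directly from the explicit formulas already displayed in the excerpt, since $P_2$ is just the second projection and everything in the definitions of $\Cal D$ factors cleanly through that projection.

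First, for the algebra structure, I would observe that $P_2$ sends the unit $\mathbf{1}_{\Cal D}$ (the empty forest) to the unit $\mathbf{1}_{\Cal H}$ (also the empty forest), and that $P_2$ is clearly multiplicative on generators: if we extend $P_2$ to the symmetric algebra $\Cal D = S(V)$ multiplicatively, then directly from the formula $(t,s)(t',s') = (tt',ss')$ we get
\begin{equation*}
P_2\bigl((t,s)(t',s')\bigr) = ss' = P_2(t,s)\,P_2(t',s').
\end{equation*}
This already suffices, by universality of $S(V)$, to conclude that $P_2$ is an algebra morphism onto $\Cal H = S(\Cal T)$.

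Next, for the counit I would note that by the very definition $\varepsilon_{\Cal D}(t,s) = \varepsilon_{\Cal H}(s) = \varepsilon_{\Cal H}(P_2(t,s))$, so counit compatibility is immediate. The only content of the statement is therefore the compatibility with the coproducts, which I would check on generators $(t,s) \in V$ (the general case then follows because both coproducts are algebra morphisms). Apply $P_2 \otimes P_2$ to the explicit formula for $\Delta_{\Cal D}$:
\begin{equation*}
(P_2 \otimes P_2)\Delta_{\Cal D}(t,s) = \sum_{c \in \mop{Adm}(s)} P_2\bigl(t, P^c(s)\bigr) \otimes P_2\bigl(R^c(t), R^c(s)\bigr) = \sum_{c \in \mop{Adm}(s)} P^c(s) \otimes R^c(s),
\end{equation*}
which is exactly $\Delta_{\Cal H}(s) = \Delta_{\Cal H}\bigl(P_2(t,s)\bigr)$. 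Thus $(P_2 \otimes P_2)\circ \Delta_{\Cal D} = \Delta_{\Cal H} \circ P_2$ on $V$, and by multiplicativity on all of $\Cal D$.

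There is no real obstacle here: the proposition is essentially a tautology built into the definition of $\Delta_{\Cal D}$, where the first component $t$ of a pair $(t,s)$ is carried along as a passive parameter and cuts are only performed on $s$. The one thing worth pointing out explicitly, for the reader, is that the remark preceding the proposition already ensures $\Delta_{\Cal D}$ is well-defined on $V$ (the second components $P^c(s)$ and $R^c(s)$ are prunings of the appropriate trees), so no hidden verification is needed before the final one-line coproduct computation.
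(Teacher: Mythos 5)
Your proof is correct and follows essentially the same route as the paper's: the whole content is applying $P_2\otimes P_2$ to the explicit formula for the coproduct of $(t,s)$ in $\Cal D$ and observing that it collapses to $\sum_{c}P^c(s)\otimes R^c(s)=\Delta(s)$. The paper dismisses the algebra-morphism and counit checks as trivial where you spell them out; otherwise the two arguments coincide.
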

\begin{proof}
The fact that $P_2$ is an algebra morphism is trivial, it suffices to show that $P_2$ is a coalgebra morphism, i.e. $P_2$ verifies the following commutative diagram: 
\diagramme{
\xymatrix{ \Cal D \ar[rrr]^{P_2}\ar[d]_{\Delta}
&&&\Cal H\ar[d]^{\Delta}\\
\Cal D \otimes \Cal D
\ar[rrr]_{P_2 \otimes P_2}&&&\Cal H \otimes \Cal H
}
}
which can be seen by direct calculation:
\begin{eqnarray*}
\Delta \circ P_2 (t, s) &=& \Delta(s) \\  
&=&\sum_{c \in {\mop{\tiny{Adm}}(s)}} P^c(s)\otimes  R^c(s)\\
&=&\sum_{c \in {\mop{\tiny{Adm}}(s)}} P_2 \big(t, P^c(s)\big)\otimes P_2 \big( R^c(t), R^c(s)\big)\\
&=& (P_2 \otimes P_2) \Delta (t, s).
\end{eqnarray*}
\end{proof}

%%%%%%%%%%%%%%%%%%%%%%%%%%%%%%%%%%%%%%%%%%%%%%%%%%%%%%%%%%%%%%%%%%
%%%%%%%%%%%%%%%%%%%%%%%%%%%%%%%%%%%%%%%%%%%%%%%%%%%%%%%%%%%%%%%%%%%
\section{The enveloping algebra of pre-Lie algebra}
%%%%%%%%%%%%%%%%%%%%%%%%%%%%%%%%%%%%%%%%%%%%%%%%%%%%%%%%%%%%%%%%%%
%%%%%%%%%%%%%%%%%%%%%%%%%%%%%%%%%%%%%%%%%%%%%%%%%%%%%%%%%%%%%%%%%%%
In this section, we describe the method of Oudom and Guin \cite{og} to find the enveloping algebra of a pre-Lie algebra.

\begin{definition}
A Lie algebra over a field $k$ is a vector space $V$ endowed with a bilinear bracket $[., .]$ satisfying:
\begin{enumerate}
\item the antisymmetry:
$$
[x , y] = - [y , x] \hspace{3cm} \forall x, y \in V.
$$
\item the Jacobi identity:
$$
[x , [y , z]] + [y , [z , x]] + [z , [x , y]]  = 0 \hspace{3cm} \forall x, y , z \in V.
$$
\end{enumerate}
\end{definition}

\begin{definition}\cite{ch, dm11}
A left pre-Lie algebra over a field $k$ is a $k$-vector space $\Cal A$ with a binary bilinear product 
$\triangleright$ that satisfies the left pre-Lie identity:
\begin{equation}
(x \triangleright y) \triangleright z - x \triangleright (y \triangleright z) = (y \triangleright x) \triangleright z -  y \triangleright (x \triangleright z), 
\end{equation}
for all $x$, $y$, $z \in \Cal A$. Analogously, a right pre-Lie algebra is a $k$-vector space $\Cal A$ with a
binary bilinear product $\triangleleft$ that satisfies the right pre-Lie identity:
\begin{equation}
(x \triangleleft y) \triangleleft z - x \triangleleft (y \triangleleft z) = (x \triangleleft z) \triangleleft y -  x \triangleleft (z \triangleleft y).
\end{equation}
\end{definition}

As any right pre-Lie algebra $(\Cal A, \triangleleft )$ is also a left pre-Lie algebra with product
$x \triangleright y := y \triangleleft x$, we will only consider left pre-Lie algebras for the moment. The left
pre-Lie identity rewrites as:
\begin{equation}
L_{[x,  y]} = [L_x,  L_y],
\end{equation}
where $L_x : A  \longrightarrow A$ is defined by $L_x y = a \triangleright b$, and where the bracket on the
left-hand side is defined by $[a, b] := a \triangleright b -  b  \triangleright a$. As a consequence this bracket
satisfies the Jacobi identity.

\begin{definition}\label{odam}\cite{og}
Let $(A, \triangleright)$ be a pre-Lie algebra. We consider the Hopf symmetric algebra $\Cal S (A)$ equipped with its usual coproduct $\Delta$. We extend the product $\triangleright$ to $\Cal S (A)$. Let $a, b$ and $c \in \Cal S (A)$, and $x \in A$. We put:
\begin{eqnarray*}
\un \triangleright a &=& a\\
a \triangleright \un &=& \varepsilon(a) \un\\
(x a) \triangleright b &=& x \triangleright (a \triangleright b) - (x \triangleright a) \triangleright b\\
a \triangleright (b  c) &=& \sum_{a}  (a^{(1)} \triangleright b) (a^{(2)} \triangleright c).
\end{eqnarray*}

On $\Cal S (A)$, we define a product $\star$ by:
\begin{eqnarray*}
a \star b &=& \sum_{a}  a^{(1)} (a^{(2)} \triangleright b).
\end{eqnarray*}
\end{definition}
\begin{theorem} The space $(\Cal S (A), \star, \Delta)$is a Hopf algebra which is isomorphic to the enveloping Hopf algebra $\Cal U(A_{Lie})$ of the Lie algebra $A_{Lie}$. 
\end{theorem}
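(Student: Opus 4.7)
The plan is to first verify that the recursive rules in Definition \ref{odam} consistently extend $\triangleright$ to a bilinear map $\Cal S(A) \otimes \Cal S(A) \longrightarrow \Cal S(A)$, then to establish associativity of $\star$ through a key intermediate identity, and finally to identify the resulting Hopf algebra with $\Cal U(A_{Lie})$ via a filtered isomorphism argument.

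First I would check that $\triangleright$ is well defined on $\Cal S(A) \otimes \Cal S(A)$. The fourth rule in Definition \ref{odam} defines $a \triangleright (bc)$ coassociatively in the second slot, while the third rule defines $(xa) \triangleright b$ inductively in the first slot for $x \in A$; one must verify that these two rules are compatible on monomials of the form $xa \triangleright bc$, and in particular that $(xy) \triangleright b = (yx) \triangleright b$ for $x,y \in A$. The latter symmetry is precisely where the left pre-Lie identity is used: expanding by rule three gives $(xy)\triangleright b - (yx)\triangleright b = x \triangleright (y \triangleright b) - (x\triangleright y)\triangleright b - y \triangleright (x\triangleright b) + (y \triangleright x)\triangleright b$, which vanishes by the pre-Lie relation. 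Once this is settled, induction on the degree of $a$ in $\Cal S(A)$ gives the fundamental identity
\begin{equation}\label{key}
(a \star b) \triangleright c = a \triangleright (b \triangleright c),
\end{equation}
which is the engine of the whole proof.

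With \eqref{key} in hand, associativity of $\star$ follows by a direct calculation: using $\Delta(a \star b) = \Delta(a) \star \Delta(b)$ (itself proved by induction, writing $\Delta$ on the defining formula of $\star$ and invoking the fact that $\triangleright$ is a Hopf algebra action in the second variable via the fourth rule), one computes
\begin{equation*}
(a \star b) \star c = \sum_{(a \star b)} (a \star b)^{(1)} \bigl( (a \star b)^{(2)} \triangleright c\bigr)
\end{equation*}
and, using that $\Delta$ is a morphism for $\star$ together with \eqref{key}, shows that this equals $a \star (b \star c)$. Compatibility of $\star$ with the counit and the existence of the antipode (inherited from $\Cal S(A)$ as a connected graded bialgebra) then give the Hopf algebra structure.

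For the isomorphism with $\Cal U(A_{Lie})$, I would observe that for $x, y \in A$ primitive one has $x \star y = xy + x \triangleright y$, hence the commutator is $[x,y]_\star = x \triangleright y - y \triangleright x$, which is exactly the Lie bracket of $A_{Lie}$. The inclusion $A \hookrightarrow (\Cal S(A), \star)$ is therefore a morphism of Lie algebras, and the universal property of the enveloping algebra produces a unique Hopf algebra morphism $\Phi: \Cal U(A_{Lie}) \longrightarrow (\Cal S(A), \star, \Delta)$. Both are connected filtered Hopf algebras whose associated graded algebras are commutative polynomial algebras on $A$ (by PBW on the left, and by the fact that $\star$ differs from the commutative product of $\Cal S(A)$ only by lower order terms on the right); since $\Phi$ is the identity on $A$ and hence on associated gradeds, it is an isomorphism.

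The main obstacle will be the verification that the extension of $\triangleright$ is well defined, together with the proof of identity \eqref{key}; both are essentially combinatorial inductions relying on the coassociativity and cocommutativity of $\Delta$ on $\Cal S(A)$ and on the left pre-Lie identity, but the bookkeeping of terms when mixing the third and fourth defining rules is delicate and is the place where every other step silently relies.
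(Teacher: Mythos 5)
The paper's own ``proof'' of this theorem is a one-line citation to Oudom--Guin \cite{og}, and your outline is a correct reconstruction of exactly that argument: well-definedness of the extension of $\triangleright$ via the left pre-Lie identity, the key identity $(a\star b)\triangleright c=a\triangleright(b\triangleright c)$, associativity of $\star$ through compatibility of $\star$ with $\Delta$, and the identification with $\Cal U(A_{Lie})$ via the universal property together with the associated-graded/PBW comparison. The only slight imprecision is that the antipode for $\star$ is obtained because $(\Cal S(A),\star,\Delta)$ is a connected \emph{filtered} bialgebra (the product $\star$ is not graded, only filtered), rather than being inherited from the graded antipode of the symmetric algebra, but this does not affect the argument.
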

\begin{proof} This theorem was proved by Oudom and Guin in \cite{og}. 
\end{proof}
%%%%%%%%%%%%%%%%%%%%%%%%%%%%%%%%%%%%%%%%%%%%%%%%%%%%%%%%%%%%%%%%%%
%%%%%%%%%%%%%%%%%%%%%%%%%%%%%%%%%%%%%%%%%%%%%%%%%%%%%%%%%%%%%%%%%%%

%%%%%%%%%%%%%%%%%%%%%%%%%%%%%%%%%%%%%%%%%%%%%%%%%%%%%%%%%%%%%%%%%%%
%%%%%%%%%%%%%%%%%%%%%%%%%%%%%%%%%%%%%%%%%%%%%%%%%%%%%%%%%%%%%%%%%%%
\section{The enveloping algebra of the pre-Lie algebra of rooted trees}
%%%%%%%%%%%%%%%%%%%%%%%%%%%%%%%%%%%%%%%%%%%%%%%%%%%%%%%%%%%%%%%%%%%%%%%%%%%%%%%
%%%%%%%%%%%%%%%%%%%%%%%%%%%%%%%%%%%%%%%%%%%%%%%%%%%%%%%%%%%%%%%%%%%%%%%%%%%%%%%
%%%%%%%%%%%%%%%%%%%%%%%%%%%%%%%%%%%%%%%%%%%%%%%%%%%%%%%%%%%%%%%%%%%

Grafting pre-Lie algebra  of rooted trees were studied for the first time by F. Chapoton and M. Livernet \cite{cha}, D. Manchon and A. Saidi \cite{ms} and after that by others in different domains. The grafting product is given, for all $t, s \in \Cal T$, by:
$$t \rightarrow s = \sum_{v \in V(s)} t \rightarrow_v s, $$
where $t \rightarrow_v s$ is the tree obtained by grafting the root of $t$ on the vertex $v$ of $s$.
More explicitly, the operation $t \rightarrow s$ consists of grafting the root of $t$ on every vertex
of $s$.
\begin{ex}
\begin{eqnarray*}
\racine\rightarrow\echela &=& \echelb +  \arbrey\\
\arbrey\rightarrow\arbreha   &=& \arbreml+\arbremg+\arbremo+\arbremi+\arbremt\\
\arbrey\rightarrow\arbrehd  &=&\arbremj+\arbremr+2\arbremv+\arbremw .
\end{eqnarray*}
\end{ex}
\begin{theorem}\cite{Dbu, DL} Equipped by $\rightarrow$, the space  $\Cal T$ is a pre-Lie algebra. 
\end{theorem}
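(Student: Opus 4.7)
The standard approach is to verify the left pre-Lie identity
$$(x \rightarrow y) \rightarrow z - x \rightarrow (y \rightarrow z) = (y \rightarrow x) \rightarrow z - y \rightarrow (x \rightarrow z)$$
by establishing that the associator $A(x,y,z) := (x \rightarrow y) \rightarrow z - x \rightarrow (y \rightarrow z)$ is symmetric in its first two arguments. The plan is to expand both terms combinatorially using $t \rightarrow s = \sum_{v \in V(s)} t \rightarrow_v s$ and to track where each root lands.

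First I would expand
$$x \rightarrow (y \rightarrow z) = \sum_{v \in V(z)} \sum_{w \in V(y \rightarrow_v z)} x \rightarrow_w (y \rightarrow_v z).$$
The key observation is that the vertex set of $y \rightarrow_v z$ is the disjoint union $V(y) \sqcup V(z)$, so this sum splits into two pieces according to whether $w \in V(y)$ or $w \in V(z)$. When $w \in V(y)$, the two graftings act on disjoint regions and commute in the following sense:
$$x \rightarrow_w (y \rightarrow_v z) = (x \rightarrow_w y) \rightarrow_v z.$$
Summing over $w \in V(y)$ and $v \in V(z)$ recovers exactly $(x \rightarrow y) \rightarrow z$.

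Consequently,
$$x \rightarrow (y \rightarrow z) = (x \rightarrow y) \rightarrow z + \sum_{v, w \in V(z)} \Bigl( y \rightarrow_v z \Bigr) \text{ with } x \text{ further grafted at } w,$$
which I will write compactly as
$$A(x,y,z) = -\sum_{(v,w) \in V(z) \times V(z)} x \rightarrow_w (y \rightarrow_v z).$$
This right-hand side is manifestly invariant under exchanging $(x,v) \leftrightarrow (y,w)$, hence symmetric in $x$ and $y$, which gives the pre-Lie identity.

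The main technical point, and essentially the only nontrivial step, is the commutation identity $x \rightarrow_w (y \rightarrow_v z) = (x \rightarrow_w y) \rightarrow_v z$ when $w \in V(y)$ and $v \in V(z)$. I would justify this by unfolding the definition of $\rightarrow_v$ as edge-addition on a disjoint union of vertex sets: grafting $y$ at $v \in V(z)$ merely adds a single edge from the root of $y$ to $v$ without altering $V(y)$, so a subsequent grafting of $x$ onto any $w \in V(y)$ can equivalently be performed before attaching $y$ to $z$. Once this local commutation is established, the rest is a matter of reindexing the double sum, and no further difficulty arises.
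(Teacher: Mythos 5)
Your proof is correct. Note that the paper itself gives no proof of this theorem: it is stated with citations to Burde and to Dzhumadl'daev--L\"ofwall, so there is nothing to compare against directly. Your argument is the standard one, and it is in fact exactly the strategy the paper \emph{does} carry out one section later for the doubling product $\leadsto$: expand the nested grafting, split the sum over vertices of $y \rightarrow_v z$ into the part where the second grafting lands on $V(y)$ (which reassembles $(x\rightarrow y)\rightarrow z$ via the commutation $x \rightarrow_w (y \rightarrow_v z) = (x \rightarrow_w y) \rightarrow_v z$ for $w \in V(y)$) and the part where it lands on $V(z)$, and observe that the leftover double sum over $V(z)\times V(z)$ is symmetric in $x$ and $y$ because graftings at two vertices of $z$ commute. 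Your identification of the commutation identity as the one nontrivial step, and its justification via $V(y\rightarrow_v z)=V(y)\sqcup V(z)$, is exactly right. The only blemish is notational: in the term $x \rightarrow_w (y \rightarrow_v z)$ the relevant symmetry is the simultaneous swap $x \leftrightarrow y$ together with $w \leftrightarrow v$ (i.e.\ the pairs are $(x,w)$ and $(y,v)$, not $(x,v)$ and $(y,w)$ as written); after relabelling the dummy indices this gives $A(x,y,z)=A(y,x,z)$ as required, so the slip is harmless.
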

 
Now, we can use the method of Oudom and Guin \cite{og} to find the enveloping algebra of the grafting pre-Lie algebra of rooted trees. We consider the Hopf symmetric algebra $\Cal H' : = \Cal S (\Cal {T})$ of the pre-Lie algebra $(\Cal {T}, \rightarrow)$, equipped with its usual unshuffling coproduct $\Gamma$. We extend the product $\rightarrow$ to $\Cal H'$ by the same method used in Definition \ref{odam} and we define a product $\star$ on $\Cal H'$ by:
\begin{eqnarray*}
t \star t' &=& \sum_{(t)}  t^{(1)} (t^{(2)} \rightarrow t').
\end{eqnarray*}
By construction, the space $(\Cal H', \star, \Gamma)$ is a Hopf algebra. 
\begin{ex} 
\begin{eqnarray*}
(\echela\racine)\rightarrow\echela &=& \echela \rightarrow(\racine \rightarrow  \echela) - (\echela \rightarrow\racine) \rightarrow  \echela \\
&=& \echela \rightarrow (\arbrey + \echelb) - \echelb \rightarrow  \echela \\
&=& 2 \arbrede + \arbredg  + \arbredf + \arbredc + \arbreda - \arbrede - \arbreda\\
&=& \arbrede + \arbredg  + \arbredf + \arbredc.
\end{eqnarray*}
\begin{eqnarray*}
(\echela\racine)\star\echela &=& (\echela\racine)\rightarrow\echela + \echela\racine\echela+    \echela (\racine \rightarrow  \echela) + \racine (\echela \rightarrow\echela ) \\
&=& \arbrede + \arbredg  + \arbredf + \arbredc + \echela\racine\echela + \echela (\arbrey + \echelb) + \racine (\arbrecc + \arbreca)\\
&=& \arbrede + \arbredg  + \arbredf + \arbredc + \echela\racine\echela + \echela \arbrey + \echela \echelb + \racine \arbrecc + \racine \arbreca.
\end{eqnarray*}
\end{ex}
\begin{theorem} $(\Cal H', m, \Gamma)$ is a comodule-coalgebra on $(\Cal H,  m, \Delta)$.
\end{theorem}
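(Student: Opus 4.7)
The plan is to exhibit the natural coaction of $(\Cal H, m, \Delta)$ on $\Cal H'$ and verify the axioms. Since $\Cal H$ and $\Cal H'$ coincide as algebras (both are $\Cal S(\Cal T)$ endowed with the concatenation product $m$), the natural candidate for the right coaction is $\rho := \Delta \colon \Cal H' \to \Cal H' \otimes \Cal H$, namely the Connes--Kreimer coproduct itself, viewed as a map into the tensor product.

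The first step is to verify that $\rho$ is a genuine right coaction of the bialgebra $\Cal H$. The coassociativity $(\rho \otimes \mathrm{id})\rho = (\mathrm{id} \otimes \Delta)\rho$ and the counit condition $(\mathrm{id} \otimes \varepsilon)\rho = \mathrm{id}$ are nothing but the coassociativity and counit axioms of $\Delta$ in the Connes--Kreimer bialgebra, and so come for free from the earlier sections of the paper.

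The heart of the proof is the compatibility between $\rho$ and the unshuffling coproduct $\Gamma$: one must show that $\Gamma$ is a morphism in the category of right $\Cal H$-comodules, or equivalently that $\rho$ is a morphism of coalgebras from $(\Cal H', \Gamma)$ into the appropriate tensor coalgebra structure on $\Cal H' \otimes \Cal H$ built from $\Gamma$ and $\Delta$. A key observation is that both $\Gamma$ and $\Delta$ are multiplicative with respect to $m$, so the identity to be proved depends multiplicatively on its argument; hence it suffices to verify it on the generators of $\Cal H'$, that is, on single trees $t \in \Cal T$. For such a tree, primitivity $\Gamma(t) = t \otimes \un + \un \otimes t$ collapses one side to a very simple expression, while the other side expands via the admissible-cut decomposition $\Delta(t) = \sum_{c \in \mop{Adm}(t)} P^c(t)\otimes R^c(t)$, followed by unshuffling the pruning $P^c(t)$ and further cutting the trunk $R^c(t)$.

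The main obstacle is the combinatorial bookkeeping needed to match these two expansions term by term: one must analyze how the unshuffling of the forest $P^c(t)$ interacts with the iterated admissible cuts of the tree $R^c(t)$, and see that the contributions coming from non-trivial cuts recombine correctly with those coming from the trivial cuts of the tree. I would carry out this verification by induction on the number of vertices of $t$, treating each child subtree of the root separately and then glueing the pieces back together; the base case of a single vertex $t = \racine$ is immediate since $\Gamma(\racine) = \Delta(\racine)$. Once the identity is established on single trees, multiplicativity of $\Gamma$ and $\Delta$ with respect to $m$ extends it to all of $\Cal H'$, completing the proof that $(\Cal H', m, \Gamma)$ is a comodule-coalgebra over $(\Cal H, m, \Delta)$.
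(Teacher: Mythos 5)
Your skeleton agrees with the paper's: the coaction is the Connes--Kreimer coproduct itself (the paper writes it as a \emph{left} coaction $\Delta:\Cal H'\to\Cal H\otimes\Cal H'$, pruning in $\Cal H$, trunk in $\Cal H'$), coassociativity comes for free, and the whole content is the compatibility of $\Delta$ with $\Gamma$. Your reduction to single trees via multiplicativity of $\Gamma$, $\Delta$ and $m$ is legitimate, and is in fact a cleaner organization than the paper's direct computation on a general element. The problem is that your proof stops exactly where the theorem lives: the matching of the two expansions on a single tree is only announced as ``combinatorial bookkeeping'' to be settled ``by induction on the number of vertices,'' and no induction step is supplied. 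Moreover the step you describe --- ``unshuffling the pruning $P^c(t)$ and further cutting the trunk $R^c(t)$'' --- is not what the comodule-coalgebra axiom asks for: one side of the diagram is $(I\otimes\Gamma)\circ\Delta$, i.e.\ cut $t$ and then \emph{unshuffle the trunk}; the pruning is only ever multiplied, and the trunk is never cut a second time. With the factor order you chose ($\Cal H'\otimes\Cal H$, pruning first), applying $\Gamma$ to the pruning would actually break the identity, since a pruning is a forest and its unshuffling produces nontrivial terms with no counterpart on the other side.

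The missing ingredient is a one-line observation, not an induction: the trunk $R^c(t)$ of a single tree $t$ is again a single tree, hence $\Gamma$-primitive. Consequently
\begin{equation*}
(I\otimes\Gamma)\circ\Delta(t)\;=\;\sum_{c}P^c(t)\otimes R^c(t)\otimes\un\;+\;\sum_{c}P^c(t)\otimes\un\otimes R^c(t),
\end{equation*}
while on the other side $\Gamma(t)=t\otimes\un+\un\otimes t$ together with $\Delta(\un)=\un\otimes\un$ gives, after $\tau^{23}$ and multiplication of the two $\Cal H$-legs, exactly the same two sums. The two sides coincide term by term with no further combinatorics, and multiplicativity then extends the identity to all of $\Cal H'=\Cal S(\Cal T)$ (this is where the paper's bijection between cuts of a forest and pairs of cuts of its unshuffled parts is hiding). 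Supplying this observation, and fixing unambiguously which tensor factor carries the $\Cal H$-coefficient, is what is needed to close the gap.
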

\begin{proof} 
It is clear that $\Delta : \Cal H' \longrightarrow  \Cal H \otimes  \Cal H'$ is a coaction, that means that $\Delta$ is coassocitive. 

Second, we prove that the coproduct $\Gamma$ is morphism of left  $\Cal H'$-comodules. This amounts to the commutativity of the following diagram:
\diagramme{
\xymatrix{
 \Cal H' \ar[d]_{\Gamma} \ar[rr]^{\Delta} 
&&  {\Cal H} \otimes    {\Cal H'} \ar[d]^{I \otimes \Gamma}\\
    {\Cal H'}  \otimes   {\Cal H'}  \ar[d]_{\Delta\otimes \Delta}&& {\Cal H} \otimes   {\Cal H'} \otimes   {\Cal H'}  \\
   {\Cal H}  \otimes   {\Cal H'}  \otimes    {\Cal H}  \otimes    {\Cal H'} \ar[rr]_{\tau^{23}} 
&&\ar[u]_{m \otimes I }    {\Cal H} \otimes   {\Cal H}  \otimes    {\Cal H'}  \otimes   {\Cal H'} 
 }
}
We use the shorthand notation: $m^{13} := (m \otimes I)\circ \tau^{23}$.  
\begin{eqnarray*}
(I \otimes \Gamma) \circ \Delta (t) &=& (I \otimes \Gamma)(\sum_{c \in {\mop{\tiny{Adm}}(t)}} P^c(t)\otimes R^c(t))\\ 
&=&\sum_{c \in {\mop{\tiny{Adm}}(t)}} P^c(t)\otimes \Gamma (R^c(t))\\
&=& \sum_{c \in {\mop{\tiny{Adm}}(t)}\;,\; (t)} P^c(t)\otimes R^c(t^{(1)}) \otimes R^c(t^{(2)}).
\end{eqnarray*}
\begin{eqnarray*}
(m^{13} \otimes I) \circ (\Delta \otimes \Delta)\circ \Gamma(t) &=& (m^{13} \otimes I)\big( \sum_{(t)} \Delta(t^{(1)}) \otimes  \Delta (t^{(2)})\big) \\ 
&=& (m^{13} \otimes I) \big(\hspace{-1.5cm}\sum_{c_1 \in {\mop{\tiny{Adm}}(t^{(1)})}\;,\; c_2 \in {\mop{\tiny{Adm}}(t^{(2)})}}\hspace{-1.5cm} P^{c_1}(t^{(1)})\otimes R^{c_1}(t^{(1)}) \otimes P^{c_2}(t^{(2)}) \otimes R^{c_2}(t^{(2)})\big)\\
&=&\hspace{-0.2cm}\sum_{c_1 \in {\mop{\tiny{Adm}}(t^{(1)})}\;,\; c_2 \in {\mop{\tiny{Adm}}(t^{(2)})}} \hspace{-1cm} \big(P^{c_1}(t^{(1)}) P^{c_2}(t^{(2)})\big)\otimes R^{c_1}(t^{(1)}) \otimes R^{c_2}(t^{(2)}))\\
&=&\sum_{c \in {\mop{\tiny{Adm}}(t)}\;,\; (t)} P^{c}(t) \otimes R^{c}(t^{(1)}) \otimes R^{c}(t^{(2)}),
\end{eqnarray*}
which proves the theorem.
\end{proof}

%%%%%%%%%%%%%%%%%%%%%%%%%%%%%%%%%%%%%%%%%%%%%%%%%%%%%%%%%%%%%%%%%%
%%%%%%%%%%%%%%%%%%%%%%%%%%%%%%%%%%%%%%%%%%%%%%%%%%%%%%%%%%%%%%%%%%%
\section{The doubling pre-Lie algebra of rooted trees}
%%%%%%%%%%%%%%%%%%%%%%%%%%%%%%%%%%%%%%%%%%%%%%%%%%%%%%%%%%%%%%%%%%%%%%%%%%%%%%%
%%%%%%%%%%%%%%%%%%%%%%%%%%%%%%%%%%%%%%%%%%%%%%%%%%%%%%%%%%%%%%%%%%%%%%%%%%%%%%

\begin{definition} Let $(t_1 , s_1)$ and $(t_2, s_2)$ be two elements of $V$, we define the map $\leadsto$ by:
\begin{equation}
(t_1 , s_1)  \leadsto (t_2, s_2) := \sum_{v \in \Cal V(t_2 - s_2)}  (t_1 \rightarrow_v t_2 , s_1  s_2),
\end{equation}
where $v \in \Cal V(t_2 - s_2)$ denotes that $v$ is a vertex of $t_2$ but is not a vertex of $s_2$.
\end{definition} 
\begin{ex} 
\begin{eqnarray*}
(\echela, \racine)\leadsto (\arbrey, \racine) &=& (\arbrede ,\racine\racine) + (\arbredg, \racine\racine).
\end{eqnarray*}
\begin{eqnarray*}
(\arbrey, \echela)  \leadsto (\arbreha, \racine)   &=& (\arbremg, \racine\echela) +  (\arbremo, \racine\echela) + (\arbremi, \racine\echela) + (\arbremt, \racine\echela).
\end{eqnarray*}
\end{ex}
\begin{theorem} Equipped by $\leadsto$, the space $V$ is a pre-Lie algebra. 
\end{theorem}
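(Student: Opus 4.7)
The plan is to verify the left pre-Lie identity
\begin{equation*}
(a \leadsto b) \leadsto c - a \leadsto (b \leadsto c) = (b \leadsto a) \leadsto c - b \leadsto (a \leadsto c),
\end{equation*}
for $a = (t_1, s_1)$, $b = (t_2, s_2)$, $c = (t_3, s_3) \in V$, by direct expansion and a cancellation argument that reduces matters to the known pre-Lie identity for $\rightarrow$ on $\Cal T$.

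First I would check that $\leadsto$ is well-defined, i.e. that $(t_1 \rightarrow_v t_2,\, s_1 s_2)$ really lies in $V$ whenever $v \in \Cal V(t_2 - s_2)$. Writing $s_i = P^{c_i}(t_i)$ for admissible cuts $c_i$, the point is that $c_1 \cup c_2$ is an admissible cut of $t_1 \rightarrow_v t_2$: on any root-to-leaf path crossing the grafting edge, the portion inside $t_2$ from the root to $v$ contains no edge of $c_2$ (since $v$ sits in the trunk $R^{c_2}(t_2)$), while the portion inside the grafted copy of $t_1$ contributes at most one edge of $c_1$; the corresponding pruning is exactly the forest $s_1 s_2$.

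Next I would expand both associators. Because the second components only multiply and concatenation of forests is commutative, all four terms carry the same second component $s_1 s_2 s_3$, so the identity reduces to a statement about the first components. Using the basic identity $t_1 \rightarrow_u (t_2 \rightarrow_v t_3) = (t_1 \rightarrow_u t_2) \rightarrow_v t_3$, valid whenever $u \in \Cal V(t_2)$, the terms with $u \in \Cal V(t_2 - s_2)$ in $a \leadsto (b \leadsto c)$ cancel exactly against $(a \leadsto b) \leadsto c$, leaving
\begin{equation*}
(a \leadsto b) \leadsto c - a \leadsto (b \leadsto c) = -\sum_{u,\, v \,\in\, \Cal V(t_3 - s_3)} \big(t_1 \rightarrow_u (t_2 \rightarrow_v t_3),\, s_1 s_2 s_3\big).
\end{equation*}

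The final step is to show the right-hand side is symmetric under $(t_1,s_1) \leftrightarrow (t_2,s_2)$. Split the double sum according to whether $u = v$ or $u \ne v$: when $u = v$ the trees $t_1 \rightarrow_v (t_2 \rightarrow_v t_3)$ and $t_2 \rightarrow_v (t_1 \rightarrow_v t_3)$ coincide, since both simply attach the roots of $t_1$ and $t_2$ as new children of $v$; when $u \ne v$ the graftings at $u$ and $v$ are independent, so $t_1 \rightarrow_u (t_2 \rightarrow_v t_3) = t_2 \rightarrow_v (t_1 \rightarrow_u t_3)$, and swapping the dummy indices produces the desired symmetric expression. This is the same combinatorial identity that underlies the pre-Lie property of $\rightarrow$ on $\Cal T$, now restricted to vertices of $t_3 - s_3$. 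I expect the main point requiring care to be the well-definedness step, after which the pre-Lie identity itself is a straightforward bookkeeping argument; no new obstacle is introduced by the presence of the pruning $s_3$, since the constraint $u, v \in \Cal V(t_3 - s_3)$ is symmetric in $a$ and $b$.
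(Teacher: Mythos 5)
Your proof is correct and follows essentially the same route as the paper: expand both associators, cancel the cross terms where the grafting vertex lies in $\Cal V(t_2 - s_2)$ via the identity $t_1 \rightarrow_u(t_2\rightarrow_v t_3) = (t_1\rightarrow_u t_2)\rightarrow_v t_3$, and observe that the remaining double sum over $\Cal V(t_3-s_3)$ is symmetric in the first two arguments. Your preliminary check that $\leadsto$ actually lands in $V$ (admissibility of $c_1\cup c_2$ in $t_1\rightarrow_v t_2$ when $v$ lies in the trunk of $t_2$) is a point the paper leaves implicit and is a worthwhile addition.
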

\begin{proof} 
Let $(t_1 , s_1), (t_2 , s_2)$ and $(t_3 , s_3)$ be three elements of $V$, we have:
 \begin{eqnarray*}
(t_1 , s_1) \leadsto  \big[(t_2 , s_2) \leadsto (t_3 , s_3)\big]  &=& 
(t_1 , s_1) \leadsto  \big(\sum_{v \in \Cal V(t_3 - s_3)}  (t_2 \rightarrow_v t_3 , s_2 s_3)\big)\\
&=& \sum_{v \in \Cal V(t_3 - s_3)\; r \in \Cal V(t_2 \rightarrow_v t_3 -s_2 s_3)}  \big(t_1 \rightarrow_r (t_2 \rightarrow_v t_3) , s_1 s_2  s_3\big)\\
&=& \sum_{r\;,\;v \in \Cal V(t_3 - s_3)}  \big(t_1 \rightarrow_r (t_2 \rightarrow_v t_3) , s_1 s_2  s_3\big)\\
&& \;+\;   \sum_{v \in \Cal V(t_3 - s_3)\; r \in \Cal V(t_2 -s_2)}  \big(t_1 \rightarrow_r (t_2 \rightarrow_v t_3) , s_1 s_2 s_3\big).
\end{eqnarray*}
On the other hand we have: 
 \begin{eqnarray*}
\big[(t_1 , s_1) \leadsto (t_2 , s_2) \big]\leadsto (t_3 , s_3)  &=& \sum_{r \in \Cal V(t_2 - s_2)} (t_1 \rightarrow_r t_2 , s_1  s_2) \leadsto(t_3 , s_3)\\
&=& \sum_{v \in \Cal V(t_3 - s_3)\; r \in \Cal V(t_2 - s_2 )} \big((t_1 \rightarrow_r t_2) \rightarrow_v t_3 , s_1  s_2  s_3 \big) \\
&=& \sum_{v \in \Cal V(t_3 - s_3)\; r \in \Cal V(t_2 - s_2 )} \big((t_1 \rightarrow_r t_2) \rightarrow_v t_3 , s_1  s_2  s_3 \big).
\end{eqnarray*}
Then we have:
\begin{eqnarray*}
(t_1 , s_1) \leadsto  \big[(t_2 , s_2) \leadsto (t_3 , s_3)\big] &-& \big[(t_1 , s_1) \leadsto (t_2 , s_2) \big]\leadsto (t_3 , s_3)\\
  &=& \sum_{r\;,\;v \in \Cal V(t_3 - s_3)}  \big(t_1 \rightarrow_r (t_2 \rightarrow_v t_3) , s_1 s_2  s_3\big)\\
&& \;+\;   \sum_{v \in \Cal V(t_3 - s_3)\; r \in \Cal V(t_2 -s_2)}  \big(t_1 \rightarrow_r (t_2 \rightarrow_v t_3) , s_1 s_2 s_3\big)\\
	&& \;-\;  \sum_{v \in \Cal V(t_3 - s_3)\; r \in \Cal V(t_2 - s_2 )} \big((t_1 \rightarrow_r t_2) \rightarrow_v t_3 , s_1  s_2  s_3 \big)\\
	&=&  \sum_{r\;,\;v \in \Cal V(t_3 - s_3)}  \big(t_1 \rightarrow_r (t_2 \rightarrow_v t_3) , s_1 s_2  s_3\big),
	\end{eqnarray*}
	which is symmetric on $(t_1 , s_1)$ and $(t_2 , s_2)$. Then we obtain:
\begin{eqnarray*}
(t_1 , s_1) \leadsto  \big[(t_2 , s_2) \leadsto (t_3 , s_3)\big] &-& \big[(t_1 , s_1) \leadsto (t_2 , s_2) \big]\leadsto (t_3 , s_3)\\
	&=&\\ (t_2 , s_2) \leadsto \big[(t_1 , s_1) \leadsto(t_3 , s_3)\big] &-& \big[(t_2 , s_2) \leadsto (t_1 , s_1) \big]\leadsto(t_3 , s_3).
\end{eqnarray*}
Consequently, $\leadsto$ is pre-Lie. 
\end{proof}
\vspace{0.2cm}
%%%%%%%%%%%%%%%%%%%%%%%%%%%%%%%%%%%%%%%%%%%%%%%%%%%%%%%%%%%%%%%%%%
%%%%%%%%%%%%%%%%%%%%%%%%%%%%%%%%%%%%%%%%%%%%%%%%%%%%%%%%%%%%%%%%%%%
\section{The enveloping algebra of the doubling pre-Lie algebra of rooted trees}
%%%%%%%%%%%%%%%%%%%%%%%%%%%%%%%%%%%%%%%%%%%%%%%%%%%%%%%%%%%%%%%%%%%%%%%%%%%%%%%
%%%%%%%%%%%%%%%%%%%%%%%%%%%%%%%%%%%%%%%%%%%%%%%%%%%%%%%%%%%%%%%%%%%%%%%%%%%%%%%
We showed that $(V, \leadsto)$ is a pre-Lie algebra, so we consider the Hopf symmetric algebra $\Cal D' : = \Cal S (V)$ equipped with its usual unshuffling coproduct $\chi$. We extend the product $\leadsto $ to $\Cal D'$ by using Definition \ref{odam} and we define a product $\bigstar$ on $\Cal D'$ by: 
\begin{eqnarray*}
(t, s) \bigstar (t', s') &=& \sum_{(t, s)}  (t, s)^{(1)} \big((t, s)^{(2)} \leadsto  (t', s')\big).
\end{eqnarray*}

By construction, the space $(\Cal D', \bigstar, \chi)$ is a Hopf algebra. 

\begin{theorem} $(\Cal D', m, \chi)$ is a comodule-coalgebra on $(\Cal D,  m, \Delta)$.
\end{theorem}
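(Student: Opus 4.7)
The plan is to mirror the argument establishing the analogous comodule-coalgebra structure of $(\Cal H', m, \Gamma)$ over $(\Cal H, m, \Delta)$. First I would note that $\Delta : \Cal D' \to \Cal D \otimes \Cal D'$ is a left $\Cal D$-coaction: coassociativity is already part of the graded bialgebra structure on $\Cal D$, and since $\Cal D$ and $\Cal D'$ share the underlying symmetric algebra $S(V)$, the coproduct $\Delta$ is inherited unambiguously.

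The substantive step is to verify the commutativity of
\diagramme{
\xymatrix{
 \Cal D' \ar[d]_{\chi} \ar[rr]^{\Delta}
&&  {\Cal D} \otimes    {\Cal D'} \ar[d]^{I \otimes \chi}\\
    {\Cal D'}  \otimes   {\Cal D'}  \ar[d]_{\Delta\otimes \Delta}&& {\Cal D} \otimes   {\Cal D'} \otimes   {\Cal D'}  \\
   {\Cal D}  \otimes   {\Cal D'}  \otimes    {\Cal D}  \otimes    {\Cal D'} \ar[rr]_{\tau^{23}}
&&\ar[u]_{m \otimes I }    {\Cal D} \otimes   {\Cal D}  \otimes    {\Cal D'}  \otimes   {\Cal D'}
 }
}
which expresses that $\chi$ is a morphism of left $\Cal D$-comodules. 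Both compositions are morphisms of algebras from $\Cal D' = S(V)$ into $\Cal D \otimes \Cal D' \otimes \Cal D'$ (since $\chi$ and $\Delta$ are each multiplicative, and the twist together with the partial multiplication respects the tensor-product algebra structure), so it is enough to check the identity on the generating subspace $V$.

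A generator $(t, s) \in V$ is primitive for the unshuffling coproduct, so $\chi(t, s) = (t, s) \otimes 1 + 1 \otimes (t, s)$. Using that each pair $(R^c(t), R^c(s))$ again lies in $V$ (the remark following the doubling bialgebra theorem applies: $R^c(t)$ is a tree and $R^c(s)$ is a pruning of $R^c(t)$), so that it is itself $\chi$-primitive, the direct path gives
$$
(I \otimes \chi) \circ \Delta(t, s) = \sum_{c \in {\mop{\tiny{Adm}}(s)}} (t, P^c(s)) \otimes \bigl[(R^c(t), R^c(s)) \otimes 1 + 1 \otimes (R^c(t), R^c(s))\bigr].
$$
Traversing the diagram the other way, $(\Delta \otimes \Delta) \circ \chi(t, s) = \Delta(t, s) \otimes 1 \otimes 1 + 1 \otimes 1 \otimes \Delta(t, s)$; applying $\tau^{23}$ to swap the middle two slots, and then $m \otimes I$ (which, against the unit in each term, is simply the identity on the surviving $\Cal D$-factor), reproduces precisely the same sum.

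The main obstacle is largely bookkeeping: tracking the four tensor slots through $\tau^{23}$ and the partial multiplication. The genuine conceptual content is the stability of $V$ under the coaction $\Delta$, which keeps $(R^c(t), R^c(s))$ primitive for $\chi$; this is exactly what the remark recorded in the doubling bialgebra section guarantees. Once equality is verified on generators, extension to all of $\Cal D'$ by the multiplicativity of both sides completes the proof.
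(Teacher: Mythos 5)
Your proof is correct, and it reaches the same commutative diagram as the paper but organizes the verification differently. The paper computes both composites on an arbitrary element $(t,s)$ of $\Cal D'$ in Sweedler notation, and the crux of its calculation is the recombination of the double sum over cuts $c'\in\mop{Adm}(s^{(1)})$, $c''\in\mop{Adm}(s^{(2)})$ into a single sum over $c\in\mop{Adm}(s)$ paired with the splitting $(t,s)\mapsto (t,s)^{(1)}\otimes(t,s)^{(2)}$ — i.e.\ the multiplicativity of the cut coproduct with respect to forests is used inside the computation. You instead extract that multiplicativity as a structural observation (both composites are algebra morphisms out of $S(V)$, using commutativity of $\Cal D$ so that $m\otimes I$ is multiplicative), which reduces the check to the generating space $V$, where $\chi$-primitivity of $(t,s)$ and of each $(R^c(t),R^c(s))$ — the latter guaranteed by the remark that $\Delta(V)\subset V\otimes V$ — makes the two sides visibly equal. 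What your route buys is the elimination of the Sweedler bookkeeping and a cleaner isolation of the one genuinely combinatorial input (stability of $V$ under $\Delta$); what the paper's route buys is that it never needs to invoke the reduction-to-generators principle and so applies verbatim even if one were to forget that the coproducts are multiplicative on products of generators. Both are complete; yours is arguably the tidier argument.
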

\begin{proof} 
It is clear taht $\Delta : \Cal D' \longrightarrow \Cal D  \otimes   \Cal D'$ is a coaction, that means that $\Delta$ is coassocitive. 

Second, we prove that the coproduct $\chi$ is a morphism of left $\Cal D'$-comodules. This amounts to the commutativity of the following diagram:
\diagramme{
\xymatrix{
  \Cal D' \ar[d]_{\chi} \ar[rr]^{\Delta} 
&& \Cal D \otimes  \Cal D' \ar[d]^{I \otimes \chi}\\
 \Cal D'  \otimes  \Cal D'  \ar[d]_{\Delta\otimes \Delta}&& \Cal D \otimes   \Cal D'  \otimes   \Cal D'  \\
  \Cal D  \otimes   \Cal D' \otimes  \Cal D \otimes   \Cal D'  \ar[rr]_{\tau^{23}} 
&&\ar[u]_{m \otimes I}   \Cal D  \otimes  \Cal D \otimes    \Cal D'  \otimes  \Cal D' 
 }
}

Let $(t, s) \in \Cal D'$, we have:
\begin{eqnarray*}
(I \otimes \chi) \circ \Delta (t, s) &=& (I \otimes \chi) \left(\sum_{c \in {\mop{\tiny{Adm}}(s)}} \big(t, P^c(s)\big)\otimes \big( R^c(t), R^c(s)\big) \right)\\ 
&=& \sum_{c \in {\mop{\tiny{Adm}}(s)}} \big(t, P^c(s)\big)\otimes \chi \big( R^c(t), R^c(s)\big)\\
&=& \sum_{c \in {\mop{\tiny{Adm}}(s)}} \big(t, P^c(s)\big)\otimes \big( R^c(t), R^c(s)\big)^{(1)}\otimes \big( R^c(t), R^c(s)\big)^{(2)}.
\end{eqnarray*}
\begin{eqnarray*}
(m^{13} \otimes I) \circ (\Delta \otimes \Delta)\circ \chi (t, s) &=& (m^{13} \otimes I)\left( \sum_{(t, s)} \Delta\big((t, s)^{(1)}\big) \otimes  \Delta \big((t, s)^{(2)}\big)\right) \\ 
&=& (m^{13} \otimes I)\left( \sum_{(t, s)} \Delta\big((t^{(1)}, s^{(1)})\big) \otimes \Delta \big((t^{(2)}, s^{(2)})\big)\right) \\ 
&=& (m^{13} \otimes I) \Big( \sum_{c' \in {\mop{\tiny{Adm}}(s^{(1)})}\;,\;c'' \in {\mop{\tiny{Adm}}(s^{(2)})}} (t^{(1)}, P^{c'}(s^{(1)}))\\
&&\hspace{0.2cm}\otimes \big( R^{c'}(t^{(1)}), R^{c'}(s^{(1)})\big)\otimes (t^{(2)}, P^{c''}(s^{(2)}))\otimes \big( R^{c''}(t^{(2)}), R^{c''}(s^{(2)})\big)\Big)\\
&=& \sum_{c' \in {\mop{\tiny{Adm}}(s^{(1)})}\;,\;c'' \in {\mop{\tiny{Adm}}(s^{(2)})}} \big(t^{(1)} t^{(2)}, P^{c'}(s^{(1)})P^{c''}(s^{(2)})\big)\\
&&\hspace{2cm} \otimes \big( R^{c'}(t^{(1)}), R^{c'}(s^{(1)})\big)\otimes \big( R^{c''}(t^{(2)}), R^{c''}(s^{(2)})\\
&=& \sum_{(t, s)\;,\;c \in {\mop{\tiny{Adm}}(s)}} (t, P^{c}(s)) \otimes \big( R^{c}(t^{(1)}), R^{c}(s^{(1)})\big)\otimes \big( R^{c}(t^{(2)}), R^{c}(s^{(2)})\\
&=& \sum_{c \in {\mop{\tiny{Adm}}(s)}} \big(t, P^c(s)\big)\otimes \big( R^c(t), R^c(s)\big)^{(1)}\otimes \big( R^c(t), R^c(s)\big)^{(2)},
\end{eqnarray*}
which proves the theorem.
\end{proof}
%%%%%%%%%%%%%%%%%%%%%%%%%%%%%%%%%%%%%%%%%%%%%%%%%%%%%%%%%%%%%%%%%%
%%%%%%%%%%%%%%%%%%%%%%%%%%%%%%%%%%%%%%%%%%%%%%%%%%%%%%%%%%%%%%%%%%%
\section{Relations between the two pre-Lie structures}
%%%%%%%%%%%%%%%%%%%%%%%%%%%%%%%%%%%%%%%%%%%%%%%%%%%%%%%%%%%%%%%%%%%%%%%%%%%%%%%
%%%%%%%%%%%%%%%%%%%%%%%%%%%%%%%%%%%%%%%%%%%%%%%%%%%%%%%%%%%%%%%%%%%%%%%%%%%%%%%
In this section, we prove that $V$ is a left pre-Lie module on $\Cal T$ and we find some relations between the two pre-Lie structures defined on $V$ and $\Cal T$. Also we show that the enveloping algebra of the pre-Lie algebra $(\Cal D', \bigstar, \chi)$ is a module-bialgebra on $(\Cal H', \star, \Gamma)$.

%%%%%%%%%%%%%%%%%%%%%%%%%%%%%%%%%%%%%%%%%%%%%%%%%%%%%%%%%%%%%%%%%%
%%%%%%%%%%%%%%%%%%%%%%%%%%%%%%%%%%%%%%%%%%%%%%%%%%%%%%%%%%%%%%%%%%%
\subsection{Left pre-Lie module}
%%%%%%%%%%%%%%%%%%%%%%%%%%%%%%%%%%%%%%%%%%%%%%%%%%%%%%%%%%%%%%%%%%%%%%%%%%%%%%%
%%%%%%%%%%%%%%%%%%%%%%%%%%%%%%%%%%%%%%%%%%%%%%%%%%%%%%%%%%%%%%%%%%%%%%%%%%%%%%%
\begin{definition}
Let $(\Cal A, \circ)$ be a pre-Lie algebra. A left $\Cal A$-module is a vector space $\Cal M$ provided with a bilinear law noted $\succ : \Cal A \otimes \Cal M \longrightarrow \Cal M$ such that for all $a, b \in \Cal A$ and $m \in \Cal M$, we have:
\begin{equation}
   a \succ (b \succ m) - (a \circ b)\succ m = b \succ (a \succ m) - (b \circ a) \succ m. 
\end{equation}
\end{definition}
\begin{definition} 
Let $t_1 \in {\Cal T}$ and $(t_2,  s_2) \in V$, we define the map $\Diamond$ by:
\begin{equation}
t_1 \Diamond (t_2, s_2) := \sum_{v \in \Cal V(s_2)}  (t_1 \rightarrow_v t_2 , t_1 \rightarrow_v s_2). 
\end{equation} 
\end{definition}
\begin{theorem} Equipped by $\Diamond$, the space $V$ is a left pre-Lie module on ${\Cal T}$. In other words for any $t_1, t_2 \in {\Cal T}$ and $(t_3 , t_3) \in V$, we have:
$$
t_1 \Diamond \Big[t_2 \Diamond (t_3 , s_3)\Big] - (t_1  \rightarrow t_2) \Diamond (t_3 , s_3) = t_2 \Diamond \Big[t_1 \Diamond(t_3 , s_3)\Big] - (t_2  \rightarrow t_1) \Diamond(t_3 , s_3).$$ 
\end{theorem}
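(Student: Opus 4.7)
The plan is to expand both iterated applications of $\Diamond$ on the left-hand side directly from the definition, and to decompose the resulting inner sum using the disjoint vertex partition $\Cal V(t_2\rightarrow_v s_3) = \Cal V(s_3) \sqcup \Cal V(t_2)$. Writing out
$$t_1 \Diamond \bigl[t_2 \Diamond (t_3, s_3)\bigr] = \sum_{v \in \Cal V(s_3)}\;\sum_{w \in \Cal V(t_2 \rightarrow_v s_3)} \bigl(t_1 \rightarrow_w (t_2 \rightarrow_v t_3),\; t_1 \rightarrow_w (t_2 \rightarrow_v s_3)\bigr),$$
the inner sum splits canonically into a \emph{graft-inside-$t_2$} piece ($w \in \Cal V(t_2)$) and a \emph{graft-inside-$s_3$} piece ($w \in \Cal V(s_3)$).

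For the first piece, the vertex $w$ lies strictly in the grafted copy of $t_2$, so the two graftings occur at non-interfering locations and commute in the obvious sense: $t_1 \rightarrow_w (t_2 \rightarrow_v t_3) = (t_1 \rightarrow_w t_2) \rightarrow_v t_3$, and analogously for the second component involving $s_3$. Summing over $w \in \Cal V(t_2)$ folds the inner graftings into the definition of $\rightarrow$, and summing over $v \in \Cal V(s_3)$ then reproduces exactly $(t_1 \rightarrow t_2) \Diamond (t_3, s_3)$; this cancels against the subtracted term on the left. What survives is the piece
$$\sum_{v,w \in \Cal V(s_3)} \bigl(t_1 \rightarrow_w (t_2 \rightarrow_v t_3),\; t_1 \rightarrow_w (t_2 \rightarrow_v s_3)\bigr).$$

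I expect the main obstacle to be checking that this surviving double sum is manifestly invariant under the exchange $t_1 \leftrightarrow t_2$, particularly when $v = w$, because then both $t_1$ and $t_2$ are grafted at the very same vertex of $s_3$. The verification reduces to the elementary observation that two successive graftings at a common vertex simply add two new children and hence produce the same rooted tree regardless of the order. When $v \neq w$ the two graftings occur at disjoint sites and commute trivially. Once this symmetry is established, applying the identical reduction to the right-hand side of the claimed identity yields the same symmetric expression, and the pre-Lie module axiom follows at once.
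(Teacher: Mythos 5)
Your proposal is correct and follows essentially the same route as the paper: expand the nested $\Diamond$, split the inner grafting vertex over the disjoint union $\Cal V(t_2)\sqcup\Cal V(s_3)$, cancel the $\Cal V(t_2)$ piece against $(t_1\rightarrow t_2)\Diamond(t_3,s_3)$, and observe that the surviving double sum over $\Cal V(s_3)$ is symmetric in $t_1$ and $t_2$. Your explicit check of the $v=w$ case (two graftings at a common vertex commute) is a point the paper leaves implicit, but the argument is the same.
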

\begin{proof} 
Let $t_1, t_2$ be two elements of ${\Cal T}$, and let $(t_3 , s_3)$ be an element of $V$, we have:
 \begin{eqnarray*}
t_1 \Diamond \Big[t_2 \Diamond (t_3 , s_3)\Big]  &=& 
t_1 \Diamond \Big[\sum_{r \in \Cal V(s_3)}  (t_2 \rightarrow_r t_3 , t_2 \rightarrow_r s_3)\Big]\\
&=& \sum_{r \in \Cal V(s_3)\; l \in \Cal V(t_2 \rightarrow_r s_3)}  [t_1 \rightarrow_l (t_2 \rightarrow_r t_3) , t_1 \rightarrow_l (t_2 \rightarrow_r s_3)]\\
&=& \sum_{l\;, \; r \in \Cal V(s_3)}  [t_1 \rightarrow_l (t_2 \rightarrow_r t_3) , t_1 \rightarrow_l (t_2 \rightarrow_r s_3)]   \\
&& \;+\; \sum_{l \in \Cal V(s_2)\;, \; r \in \Cal V(s_3)}  [t_1 \rightarrow_l (t_2 \rightarrow_r t_3) , t_1 \rightarrow_l (t_2 \rightarrow_r s_3)].
\end{eqnarray*}
On the other hand we have:
 \begin{eqnarray*}
(t_1  \rightarrow t_2) \Diamond(t_3 , s_3)  &=& 
\sum_{l \in \Cal V(t_2)}t_1 \rightarrow_l t_2 \Diamond(t_3 , s_3)\\
&=& 
\sum_{l \in \Cal V(t_2)\; r \in \Cal V(s_3)} \big[(t_1 \rightarrow_l t_2)\rightarrow_r t_3 , (t_1 \rightarrow_l t_2) \rightarrow_r s_3\big]\\
&=& 
\sum_{l \in \Cal V(t_2)\; r \in \Cal V(s_3)} \big[(t_1 \rightarrow_l t_2)\rightarrow_r t_3 , (t_1 \rightarrow_l t_2) \rightarrow_r s_3\big].
\end{eqnarray*}
Then, we have:
\begin{eqnarray*}
t_1 \Diamond \Big[t_2 \Diamond (t_3 , s_3)\Big] &-& (t_1  \rightarrow t_2) \Diamond (t_3 , s_3) \\ &=& \sum_{l\;, \; r \in \Cal V(s_3)}  [t_1 \rightarrow_l (t_2 \rightarrow_r t_3) , t_1 \rightarrow_l (t_2 \rightarrow_r s_3)]   \\
&& \;+\; \sum_{l \in \Cal V(s_2)\;, \; r \in \Cal V(s_3)}  [t_1 \rightarrow_l (t_2 \rightarrow_r t_3) , t_1 \rightarrow_l (t_2 \rightarrow_r s_3)]\\
&& \;-\;\sum_{l \in \Cal V(t_2)\; r \in \Cal V(s_3)} \big[(t_1 \rightarrow_l t_2)\rightarrow_r t_3 , (t_1 \rightarrow_l t_2) \rightarrow_r s_3\big]\\
&=&\sum_{l\;, \; r \in \Cal V(s_3)}  [t_1 \rightarrow_l (t_2 \rightarrow_r t_3) , t_1 \rightarrow_l (t_2 \rightarrow_r s_3)]  ,
\end{eqnarray*} 
which is symmetric in $t_1$ and $t_2$. Therefore:
$$
t_1 \Diamond \Big[t_2 \Diamond (t_3 , s_3)\Big] - (t_1  \rightarrow t_2) \Diamond (t_3 , s_3) = t_2 \Diamond \Big[t_1 \Diamond(t_3 , s_3)\Big] - (t_2  \rightarrow t_1) \Diamond(t_3 , s_3),$$
which proves the theorem.
\end{proof}
%%%%%%%%%%%%%%%%%%%%%%%%%%%%%%%%%%%%%%%%%%%%%%%%%%%%%%%%%%%%%%%%%%%
%%%%%%%%%%%%%%%%%%%%%%%%%%%%%%%%%%%%%%%%%%%%%%%%%%%%%%%%%%%%%%%%%%%
%%%%%%%%%%%%%%%%%%%%%%%%%%%%%%%%%%%%%%%%%%%%%%%%%%%%%%%%%%%%%%%%%%%%%%%%%%%%%%%
%%%%%%%%%%%%%%%%%%%%%%%%%%%%%%%%%%%%%%%%%%%%%%%%%%%%%%%%%%%%%%%%%%%
\subsection{Relation between $\leadsto$ and $\Diamond$}
%%%%%%%%%%%%%%%%%%%%%%%%%%%%%%%%%%%%%%%%%%%%%%%%%%%%%%%%%%%%%%%%%%%%%%%%%%%%%%%
%%%%%%%%%%%%%%%%%%%%%%%%%%%%%%%%%%%%%%%%%%%%%%%%%%%%%%%%%%%%%%%%%%%%%%%%%%%%%%%
In this subsection, we prove that there exist relations between the action $\Diamond$ and the pre-Lie product $\leadsto$ defined on $V$. 

\begin{theorem} The law $\Diamond$ is a derivation of the algebra $(V, \leadsto )$. In other words, for any $t_1 \in \Cal {T}$ and  $(t_2, s_2), (t_3, s_3) \in V$, we have:
\begin{equation*}
t_1 \Diamond  \big((t_2, s_2) \leadsto  (t_3, s_3) \big) = \big(t_1 \Diamond (t_2, s_2) \big) \leadsto  (t_3, s_3) + (t_2, s_2) \leadsto  \big(t_1\Diamond (t_3, s_3) \big).
\end{equation*}
\end{theorem}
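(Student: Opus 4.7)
The plan is to expand both sides directly from the definitions of $\Diamond$ and $\leadsto$ and check that after reorganising the sums they match term by term. The key combinatorial observation is that the pruning $s_i$ of a tree $t_i$ has vertices that are a subset of those of $t_i$, and that grafting at two distinct vertices of the same host commutes, so the order of the two graftings can be interchanged whenever the grafting sites lie on disjoint parts of the host tree.

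First I would expand the left-hand side. Using the definition of $\leadsto$ gives
\begin{eqnarray*}
t_1 \Diamond \big((t_2,s_2)\leadsto(t_3,s_3)\big)
&=& \sum_{w\in \Cal V(t_3-s_3)} t_1 \Diamond (t_2\rightarrow_w t_3,\; s_2 s_3)\\
&=& \sum_{w\in \Cal V(t_3-s_3)}\sum_{v\in \Cal V(s_2 s_3)} \big(t_1\rightarrow_v(t_2\rightarrow_w t_3),\; t_1\rightarrow_v(s_2 s_3)\big).
\end{eqnarray*}
Since $\Cal V(s_2 s_3)=\Cal V(s_2)\sqcup \Cal V(s_3)$ and grafting on a vertex of one component of a forest only affects that component, this splits cleanly into two sums $\Sigma_{s_2}+\Sigma_{s_3}$ according to whether $v\in\Cal V(s_2)$ or $v\in\Cal V(s_3)$.

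Next I would expand the two summands of the right-hand side. For the first summand, since $v\in\Cal V(s_2)\subset\Cal V(t_2)$ and $w\in\Cal V(t_3-s_3)$ lie in disjoint parts of $t_2\rightarrow_w t_3$, the graftings commute in the sense that
\[
(t_1\rightarrow_v t_2)\rightarrow_w t_3 \;=\; t_1\rightarrow_v(t_2\rightarrow_w t_3),
\]
so $\big(t_1\Diamond(t_2,s_2)\big)\leadsto(t_3,s_3)$ is exactly $\Sigma_{s_2}$. For the second summand, the key identity is
\[
\Cal V\big((t_1\rightarrow_v t_3)-(t_1\rightarrow_v s_3)\big)=\Cal V(t_3-s_3),
\]
because grafting $t_1$ at a vertex $v\in\Cal V(s_3)$ adds the vertices of $t_1$ both to $t_3$ and to $s_3$, hence they cancel in the difference. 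Again $v\in\Cal V(s_3)$ and $w\in\Cal V(t_3-s_3)$ are distinct vertices of $t_3$, so $t_2\rightarrow_w(t_1\rightarrow_v t_3)=t_1\rightarrow_v(t_2\rightarrow_w t_3)$. This identifies $(t_2,s_2)\leadsto\big(t_1\Diamond(t_3,s_3)\big)$ with $\Sigma_{s_3}$.

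Adding the two contributions of the right-hand side recovers $\Sigma_{s_2}+\Sigma_{s_3}$, which is the left-hand side. The only delicate step, and the main potential source of error, is the bookkeeping of which vertices are available for the outer grafting in the term $(t_2,s_2)\leadsto(t_1\Diamond(t_3,s_3))$: one has to check carefully that the set $\Cal V\big((t_1\rightarrow_v t_3)-(t_1\rightarrow_v s_3)\big)$ is independent of $v$ and equals $\Cal V(t_3-s_3)$. Once this is verified, the remaining manipulations are just relabellings of summation indices.
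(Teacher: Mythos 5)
Your proposal is correct and follows essentially the same route as the paper: expand the left-hand side, split the sum over $\Cal V(s_2 s_3)$ into the parts $r\in\Cal V(s_2)$ and $r\in\Cal V(s_3)$, and identify each piece with one summand of the right-hand side via commutation of graftings at distinct vertices. Your explicit verification that $\Cal V\big((t_1\rightarrow_v t_3)-(t_1\rightarrow_v s_3)\big)=\Cal V(t_3-s_3)$ is a point the paper leaves implicit, and it is a worthwhile addition.
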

\begin{proof}
Let $t_1 \in \wt {T}$ and  $(t_2, s_2), (t_3, s_3)\in V$, we have:
\begin{eqnarray*}
t_1 \Diamond  \big((t_2, s_2) \leadsto  (t_3, s_3) \big) &=& 
t_1 \Diamond \big(\sum_{v \in \Cal V(t_3 - s_3)} (t_2  \rightarrow_v t_3, s_2 s_3) \big)\\
&=&\sum_{r \in  \Cal V(s_2 s_3)\;\; v \in \Cal V(t_3 - s_3)} \big(t_1  \rightarrow_r(t_2  \rightarrow_v t_3) , t_1  \rightarrow_r(s_2 s_3) \big)\\ 
&=&\sum_{r \in  \Cal V(s_2)\;\; v \in \Cal V(t_3 - s_3)} \big((t_1  \rightarrow_r t_2)  \rightarrow_v t_3 , (t_1  \rightarrow_r s_2) s_3 \big)\\ 
&&\;\; +\;\; \sum_{r \in  \Cal V(s_3)\;\; v \in \Cal V(t_3 - s_3)} \big(t_2  \rightarrow_v (t_1  \rightarrow_r t_3) , s_2  (t_1\rightarrow_r s_3) \big)\\ 
&=&
\big(t_1 \Diamond (t_2, s_2) \big) \leadsto  (t_3, s_3) + (t_2, s_2) \leadsto  \big(t_1\Diamond (t_3, s_3) \big).
\end{eqnarray*}
\end{proof}

\begin{theorem}
The following diagram is commutative:
\diagramme{
\xymatrix{
\Cal {T} \otimes V \ar[d]_{I\otimes P_2} \ar[rrr]^{\Diamond} 
&&&V \ar[d]^{P_2}\\
\Cal {T}\otimes\Cal H\ar[rrr]_{\rightarrow} 
&&& \Cal H}
}
In other words, the projection on the second component $P_2$ is a morphism of pre-Lie modules.
\end{theorem}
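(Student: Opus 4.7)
The plan is to unfold the two composites around the square directly from the definitions and observe that they give the same sum indexed by vertices of $s_2$. Let $t_1 \in \Cal T$ and $(t_2,s_2) \in V$ be arbitrary; it suffices to check commutativity on such elementary tensors and extend by bilinearity.

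First I would compute the top-right path. By definition of $\Diamond$ we have
\[
t_1 \Diamond (t_2,s_2) \;=\; \sum_{v \in \Cal V(s_2)} \bigl(t_1 \rightarrow_v t_2,\; t_1 \rightarrow_v s_2\bigr),
\]
and then applying $P_2$ simply retains the second component of each term, giving
\[
P_2\bigl(t_1 \Diamond (t_2,s_2)\bigr) \;=\; \sum_{v \in \Cal V(s_2)} t_1 \rightarrow_v s_2.
\]

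Next I would compute the left-bottom path. Applying $I \otimes P_2$ to $t_1 \otimes (t_2,s_2)$ yields $t_1 \otimes s_2$, and then the definition of the grafting product gives
\[
t_1 \rightarrow s_2 \;=\; \sum_{v \in \Cal V(s_2)} t_1 \rightarrow_v s_2,
\]
which coincides with the previous expression. Since both paths agree on every elementary tensor, the square commutes.

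There is no genuine obstacle: the statement is a direct consequence of the fact that $\Diamond$ was defined precisely so that its second component replays the grafting $\rightarrow$ on $s_2$. The only thing to be slightly careful about is the index set $\Cal V(s_2)$, which matches the index set $V(s_2)$ appearing in the definition of $t_1 \rightarrow s_2$, so no vertices are lost or double-counted when passing through $P_2$. Thus $P_2$ is a morphism of pre-Lie modules, as claimed.
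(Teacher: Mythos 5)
Your proof is correct and is essentially identical to the paper's: both unfold $P_2(t_1 \Diamond (t_2,s_2))$ into $\sum_{v \in \Cal V(s_2)} t_1 \rightarrow_v s_2$ and recognize this as $t_1 \rightarrow s_2 = t_1 \rightarrow P_2(t_2,s_2)$. Nothing further is needed.
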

\begin{proof}
Let be $t_1 \in \Cal {T}$ and $(t_2, s_2) \in   V$, we have:
\begin{eqnarray*}
P_2 \big(t_1 \Diamond (t_2, s_2)\big) &=& P_2 \big(\sum_{v \in \Cal V(s_2)} (t_1  \rightarrow_v t_2, t_1 \rightarrow_v  s_2) \big)\\
&=&  \sum_{v \in \Cal V(s_2)}  t_1  \rightarrow_v s_2\\
&=&  t_1 \rightarrow  s_2 \\
&=&  t_1 \rightarrow  P_2 (t_2, s_2)\\
&=&  (I \otimes P_2) \big(t_1 \otimes(t_2, s_2)\big),
\end{eqnarray*}
which proves the theorem.
\end{proof}
%%%%%%%%%%%%%%%%%%%%%%%%%%%%%%%%%%%%%%%%%%%%%%%%%%%%%%%%%%%%%%%%%%%
%%%%%%%%%%%%%%%%%%%%%%%%%%%%%%%%%%%%%%%%%%%%%%%%%%%%%%%%%%%%%%%%%%%
%%%%%%%%%%%%%%%%%%%%%%%%%%%%%%%%%%%%%%%%%%%%%%%%%%%%%%%%%%%%%%%%%%%
%%%%%%%%%%%%%%%%%%%%%%%%%%%%%%%%%%%%%%%%%%%%%%%%%%%%%%%%%%%%%%%%%%%
%%%%%%%%%%%%%%%%%%%%%%%%%%%%%%%%%%%%%%%%%%%%%%%%%%%%%%%%%%%%%%%%%%%
%%%%%%%%%%%%%%%%%%%%%%%%%%%%%%%%%%%%%%%%%%%%%%%%%%%%%%%%%%%%%%%%%%%

%%%%%%%%%%%%%%%%%%%%%%%%%%%%%%%%%%%%%%%%%%%%%%%%%%%%%%%%%%%%%%%%%%%
\subsection{Module-bialgebra}
%%%%%%%%%%%%%%%%%%%%%%%%%%%%%%%%%%%%%%%%%%%%%%%%%%%%%%%%%%%%%%%%%%%%%%%%%%%%%%%
%%%%%%%%%%%%%%%%%%%%%%%%%%%%%%%%%%%%%%%%%%%%%%%%%%%%%%%%%%%%%%%%%%%%%%%%%%%%%%%
\begin{proposition} 
Let $(t_1 , s_1)$ and $(t_2 , s_2)$ be two elements of $\Cal D'$. The product $\bigstar$ satisfies the following result:
$$(t_1 , s_1) \bigstar(t_2 , s_2) = 
(t_1 \star t_2 - t_1 \starg t_2 , s_1 s_2),$$
where:$$
t_1 \starg t_2 := \sum_{\substack{v \in \Cal V(s_2)\\{(t_1)}}} (t_1)^{(1)} (t_1)^{(2)}\rightarrow_v t_2.
$$
\end{proposition}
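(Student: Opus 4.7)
The plan is to reduce both sides of the identity to explicit finite sums by exploiting primitivity of single generators: $(t_i, s_i) \in V$ is primitive for $\chi$ in $\Cal D'$, and $t_i \in \Cal T$ is primitive for $\Gamma$ in $\Cal H'$. Starting from the left, $\chi(t_1, s_1) = (t_1, s_1) \otimes \un + \un \otimes (t_1, s_1)$; substituting in the definition of $\bigstar$ and using the Oudom--Guin normalisation $\un \leadsto (t_2, s_2) = (t_2, s_2)$ gives
$$(t_1, s_1) \bigstar (t_2, s_2) = (t_1, s_1)(t_2, s_2) + (t_1, s_1) \leadsto (t_2, s_2).$$
Unfolding the product in $\Cal D$ and the defining formula for $\leadsto$ then rewrites this as $\big(t_1 t_2 + \sum_{v \in \Cal V(t_2 - s_2)} t_1 \rightarrow_v t_2,\; s_1 s_2\big)$, which is the target form for the first component.

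The right-hand side is unpacked analogously. From $\Gamma(t_1) = t_1 \otimes \un + \un \otimes t_1$, the Oudom--Guin formula yields $t_1 \star t_2 = t_1 t_2 + t_1 \rightarrow t_2 = t_1 t_2 + \sum_{v \in \Cal V(t_2)} t_1 \rightarrow_v t_2$. For $t_1 \starg t_2$ the same coproduct appears, but under the convention that the term with $(t_1)^{(2)} = \un$ is absent from the Sweedler sum (the empty forest has no tree to graft at a specific vertex), only $(t_1)^{(2)} = t_1$ contributes, producing $t_1 \starg t_2 = \sum_{v \in \Cal V(s_2)} t_1 \rightarrow_v t_2$. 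Subtracting, the vertex range drops from $\Cal V(t_2)$ to $\Cal V(t_2) \setminus \Cal V(s_2) = \Cal V(t_2 - s_2)$, exactly matching the first component computed above; the second components are trivially $s_1 s_2$ on both sides.

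The main obstacle is pinning down the meaning of $(t_1)^{(2)} \rightarrow_v t_2$ when $(t_1)^{(2)} = \un$: the global extension satisfies $\un \rightarrow t = t$, but a per-vertex lift $\un \rightarrow_v t$ is not canonically determined by the relation $\sum_v \rightarrow_v = \rightarrow$. The formula defining $\starg$ implicitly selects the convention that makes this term vanish, and that convention is exactly what is needed for the identity to be consistent with the $\leadsto$-formula on the left. Once this is clarified the remainder is short bookkeeping, and the identity extends to arbitrary elements of $\Cal D'$ by multiplicativity of $\chi$ together with the Oudom--Guin recursion $(xa) \leadsto b = x \leadsto (a \leadsto b) - (x \leadsto a) \leadsto b$.
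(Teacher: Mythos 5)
Your computation for single generators $(t_1,s_1)\in V$ and $t_1\in\Cal T$ is correct and follows the same route as the paper: unfold the Oudom--Guin formula for $\bigstar$ through the coproduct, use the vertex-sum expression for $\leadsto$, and split $\Cal V(t_2-s_2)=\Cal V(t_2)\setminus\Cal V(s_2)$ to recognize the first component as $t_1\star t_2 - t_1\starg t_2$. Your explicit discussion of the $\un\rightarrow_v t_2$ convention is a genuine clarification of something the paper glosses over: its own proof silently lets the counit term of the Sweedler sum contribute $t_1t_2$ to $t_1\star t_2$ while contributing nothing to the per-vertex sums, which is exactly the convention you isolate and justify.

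The one substantive shortfall is the last step. The paper does not reduce to primitives; it runs the same computation directly in Sweedler notation for an arbitrary monomial $(t_1,s_1)$, using that $\chi(t_1,s_1)=\sum (t_1^{(1)},s_1^{(1)})\otimes(t_1^{(2)},s_1^{(2)})$ with $s_1^{(1)}s_1^{(2)}=s_1$, so the second component collapses to $s_1s_2$ and the first component is literally the difference of the sums over $\Cal V(t_2)$ and over $\Cal V(s_2)$ of $t_1^{(1)}(t_1^{(2)}\rightarrow_v t_2)$. Your closing assertion that the identity ``extends to arbitrary elements of $\Cal D'$ by multiplicativity of $\chi$ together with the Oudom--Guin recursion'' is the right idea but is not carried out, and it is not automatic: one must track how $\bigstar$, $\star$ and $\starg$ each transform when $(t_1,s_1)$ is replaced by a product, and the clean way to do that is precisely the general Sweedler computation just described. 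Since the proposition is stated for arbitrary elements of $\Cal D'$, you should either start from a general Sweedler coproduct (as the paper does) or spell out the induction on the number of factors of $(t_1,s_1)$.
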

\begin{proof} 
\begin{eqnarray*}
(t_1 , s_1) \bigstar(t_2 , s_2) &=& \sum_{(t_1, s_1)}  (t_1, s_1)^{(1)} \big((t_1, s_1)^{(2)} \leadsto (t_2, s_2)\big)\\ 
&=& \sum_{\substack{v \in \Cal V(t_2 - s_2)\\(t_1, s_1) }} (t^{(1)} _1, s^{(1)} _1) (t^{(2)}_{1} \rightarrow_v t_2 , s^{(2)}_1  s_2)\\
&=& \sum_{\substack{v \in \Cal V(t_2 - s_2)\\(t_1, s_1) }}  \Big(t^{(1)}_1 (t^{(2)}_{1} \rightarrow_v t_2) , s^{(1)}_1 s^{(2)}_1  s_2 \Big)\\
&=& \sum_{\substack{v \in \Cal V(t_2 - s_2)\\(t_1)}}  \big(t^{(1)}_1(t^{(2)}_{1} \rightarrow_v t_2) , s_1 s_2\big)\\
&=& \Big(\sum_{\substack{v \in \Cal V(t_2)\;\;,\; (t_1)}}  t^{(1)}_1(t^{(2)}_{1} \rightarrow_v t_2) - \sum_{\substack{v \in \Cal V(s_2)\;\;,\; (t_1)}}   t^{(1)}_1(t^{(2)}_{1} \rightarrow_v t_2) , s_1 s_2\Big)\\
&=&(t_1 \star t_2 - t_1 \starg  t_2 , s_1 s_2).
\end{eqnarray*}
\end{proof}

\begin{theorem} $(\Cal D', \bigstar, \chi)$ is a module-bialgebra on $(\Cal H', \star, \Gamma)$.
\end{theorem}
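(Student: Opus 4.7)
The strategy is to apply the Oudom-Guin construction in its pre-Lie module version. First I would extend the action $\Diamond : \Cal T \otimes V \to V$ to an action $\Diamond : \Cal H' \otimes \Cal D' \to \Cal D'$ by imposing the rules $\un \Diamond d = d$, $h \Diamond \un = \varepsilon(h)\un$, $h \Diamond (d d') = \sum_{(h)} (h^{(1)} \Diamond d)(h^{(2)} \Diamond d')$, and $(t h) \Diamond d = t \Diamond (h \Diamond d) - (t \rightarrow h) \Diamond d$ for $t \in \Cal T$, where $t \rightarrow h$ is the extension of $\rightarrow$ to $\Cal H'$ already constructed in Section 4. Well-definedness of this extension is the pre-Lie module analogue of the Oudom-Guin theorem, and rests on the left pre-Lie module identity proved in the previous subsection.

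Once the extension is in hand, the module coalgebra axiom $\chi(h \Diamond d) = \sum (h^{(1)} \Diamond d^{(1)}) \otimes (h^{(2)} \Diamond d^{(2)})$ is essentially built into the definition: the third rule above says that concatenation in $\Cal D' = S(V)$ is a morphism of left $\Cal H'$-modules, and since $\chi$ is the unshuffling coproduct (multiplicative with respect to concatenation) with $V$ as the space of primitives, a short induction on the number of generators of $d$—invoking the cocommutativity of $\Gamma$—converts this into the desired compatibility between $\Diamond$ and $\chi$. The base case $d = v \in V$ follows from $\chi(v) = v \otimes \un + \un \otimes v$ together with $h \Diamond v \in V$.

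For the module algebra axiom $h \Diamond (d \bigstar d') = \sum (h^{(1)} \Diamond d) \bigstar (h^{(2)} \Diamond d')$, I would first settle the case where $h = t \in \Cal T$ is primitive. There the axiom reduces to the Leibniz rule $t \Diamond (d \bigstar d') = (t \Diamond d) \bigstar d' + d \bigstar (t \Diamond d')$, which follows by unfolding $d \bigstar d' = \sum d^{(1)}(d^{(2)} \leadsto d')$, applying the module coalgebra axiom from the previous step, and invoking the derivation property of $t \Diamond$ with respect to $\leadsto$ established earlier. For a general $h \in \Cal H'$, one proceeds by induction on the number of tree factors using the recursive rule for $\Diamond$ together with the multiplicativity of $\Gamma$. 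The main obstacle lies precisely in this inductive step: one must verify that the auxiliary term $(t \rightarrow h) \Diamond d$ produced by the recursion is absorbed correctly on both sides of the axiom, mirroring the key coproduct computation in the proof of the Oudom-Guin theorem.
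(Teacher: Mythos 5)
Your proof is built on a different module structure than the one the paper actually uses, so you are in effect proving a different (if related) statement. The paper's action is the map $\alpha:\Cal D'\otimes\Cal H'\to\Cal D'$, $\alpha\big((t,s)\otimes t'\big)=(t\star t',s)$, i.e.\ right $\star$-multiplication on the first component only; it is \emph{not} the Oudom--Guin extension of $\Diamond$ (which grafts onto vertices of $s$ and modifies both components). With $\alpha$ the whole proof collapses to three direct computations: the action axiom is just associativity of $\star$; the compatibility of $\bigstar$ with $\alpha$ follows from the explicit formula $(t_1,s_1)\bigstar(t_2,s_2)=(t_1\star t_2-t_1\starg t_2,\,s_1s_2)$ established in the preceding proposition; and the compatibility of $\chi$ with $\alpha$ follows from $\Gamma$ and $\chi$ both being unshuffling coproducts for the Hopf algebras $(\Cal H',\star,\Gamma)$ and $(\Cal D',\bigstar,\chi)$. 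Since a ``module-bialgebra'' structure is meaningless until the action is named, you should at minimum state which action you are using and recognize that it is not the paper's.

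Even taken on its own terms, your argument has a genuine gap exactly where you flag it. You invoke a ``pre-Lie module analogue of the Oudom--Guin theorem'' to get well-definedness of the extension of $\Diamond$ to $\Cal H'\otimes\Cal D'$ and to verify the module-algebra axiom for non-primitive $h$, but neither the paper nor the cited reference \cite{og} supplies such a statement, and you explicitly describe the absorption of the auxiliary term $(t\rightarrow h)\Diamond d$ in the inductive step as ``the main obstacle'' without resolving it. That step is precisely where the content lies: the recursion $(th)\Diamond d=t\Diamond(h\Diamond d)-(t\rightarrow h)\Diamond d$ must be shown consistent (independent of the chosen factorization of $th$) and compatible with $\bigstar$, and this requires the pre-Lie module identity and the derivation property to interact with the extended product $\rightarrow$ on $\Cal H'$ in a way you have not checked. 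The primitive cases you verify (Leibniz rule via the derivation property of $\Diamond$ over $\leadsto$, and the coalgebra compatibility on generators) are correct but are the easy part; as written the proof is incomplete.
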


\begin{proof}
We consider the map: $\alpha:   {\Cal D'} \otimes   {\Cal H'} \longrightarrow   {\Cal D'}$ defined for all $(t , s) \in   {\Cal D'}$ and $t' \in   {\Cal H'}$ by:
$$\alpha ((t , s)  \otimes t') = (t \star t', s).$$
To prove this theorem, first we will show that $\alpha$ is an action which results from the following commutative diagram:
\diagramme{
\xymatrix{
{\Cal D'}\otimes {\Cal H'} \otimes {\Cal H'}  \ar[d]_{I\otimes \star} \ar[rrr]^{\alpha \otimes I} 
&&& {\Cal D'}  \otimes  {\Cal H'} \ar[d]^{\alpha}\\
 {\Cal D'} \otimes  {\Cal H'} \ar[rrr]_{\alpha} 
&&&  {\Cal D'} }
}
Let $(t , s) \in  {\Cal D'}$ and $t_1, t_2 \in   {\Cal H'}$, we have:
\begin{eqnarray*}
\alpha \circ (\alpha \otimes I) [(t, s)\otimes t_1 \otimes t_2]  &=& \alpha [(t \star t_1, s) \otimes t_2]\\ 
&=& \big((t \star t_1) \star t_2, s\big) \\
&=& \big(t \star (t_1 \star t_2), s\big)\\
&=& \alpha [(t , s) \otimes t_1 \star t_2]\\
&=& \alpha \circ (I \otimes \star) [(t , s) \otimes t_1 \otimes t_2].
\end{eqnarray*}
Second, we show that the following diagram is commutative:
\diagramme{
\xymatrix{
   {\Cal D'} \otimes   {\Cal D'}  \otimes    {\Cal H'} \ar[d]_{I\otimes I \otimes \Gamma} \ar[rr]^{\bigstar \otimes I} 
&&  {\Cal D'} \otimes    {\Cal H'}  \ar[d]^{\alpha}\\
    {\Cal D'}  \otimes    {\Cal D'}   \otimes    {\Cal H'} \otimes    {\Cal H'} \ar[d]_{\tau^{23}}&&   {\Cal D'} \\
  {\Cal D'} \otimes   {\Cal H'} \otimes   {\Cal D'} \otimes   {\Cal H'}\ar[rr]_{\alpha \otimes \alpha} 
&&\ar[u]_{\bigstar}  {\Cal D'} \otimes    {\Cal D'}
 }
}
We use the following notation: $\Gamma^{23} = \tau^{23} \circ (I\otimes I \otimes \Gamma).$
\begin{eqnarray*}
\bigstar \circ (\alpha \otimes \alpha) \circ \Gamma^{23}\big((t_1, s_1)\otimes (t_2, s_2)\otimes t\big)  &=&\sum_{(t)} \alpha \big((t_1 , s_1)\otimes t^{(1)}\big) \bigstar \alpha \big((t_2 , s_2)\otimes t^{(2)}\big)\\ 
&=& \sum_{(t)} (t_1 \star t^{(1)}, s_1) \bigstar (t_2 \star t^{(2)}, s_2)\\ 
&=& \sum_{\substack{(t)}} \big((t_1 \star t^{(1)})\star (t_2 \star t^{(2)}) - (t_1 \star t^{(1)})\starg (t_2 \star t^{(2)}), s_1 s_2\big)\\
&=& \big((t_1 \star t_2)\star t - (t_1 \starg t_2)\star t, s_1 s_2\big).
\end{eqnarray*}
On the other hand:
\begin{eqnarray*}
\alpha \circ (\bigstar \otimes I) \big((t_1, s_1)\otimes (t_2, s_2)\otimes t\big)  &=& \alpha \big((t_1, s_1)\bigstar (t_2, s_2)\otimes t\big)\\ 
&=& \alpha \big((t_1 \star t_2 - t_1 \star t_2 , s_1  s_2)\otimes t\big)\\ 
&=& \big((t_1 \star t_2 - t_1 \starg t_2) \star t, s_1 s_2\big)\\ 
&=& \big((t_1 \star t_2)\star t - (t_1 \starg t_2) \star t, s_1 s_2\big).
\end{eqnarray*}
Finally, we prove that the coproduct $\chi$ is a morphism of modules. This amounts to the commutativity of the following diagram:
\diagramme{
\xymatrix{
   {\Cal D'}  \otimes  {\Cal H'}  \ar[d]_{I\otimes \Gamma} \ar[rr]^{\alpha} 
&&  {\Cal D'} \ar[d]^{\chi}\\
   {\Cal D'}  \otimes    {\Cal H'}  \otimes   {\Cal H'}  \ar[d]_{\chi \otimes I \otimes I}&&  {\Cal D'}  \otimes   {\Cal D'} \\
  {\Cal D'} \otimes   {\Cal D'}  \otimes   {\Cal H'}  \otimes   {\Cal H'}  \ar[rr]_{\tau^{23}} 
&&\ar[u]_{\alpha \otimes \alpha}   {\Cal D'}  \otimes   {\Cal H'}  \otimes   {\Cal D'}  \otimes   {\Cal H'} 
 }
}
Let $(t_1 , s_1) \in   {\Cal D'}$ and $t_2 \in {\Cal H'}$, we have:
\begin{eqnarray*}
\chi \circ \alpha \big((t_1, s_1)\otimes t_2\big) &=& \chi \big(t_1 \star t_2, s_1 \big)\\ 
&=& \sum_{\substack{(t_1)}}  \chi  \big(t^{(1)}_1 (t^{(2)}_1 \triangleright t_2) , s\big)  \\
&=& \sum_{\substack{(t_1, s_1)\;,\; (s_2)}} (t^{(11)}_1 (t^{(12)}_1 \triangleright t^{(1)}_2) , s^{(1)})   \otimes (t^{(21)}_1 (t^{(22)}_1 \triangleright   t^{(2)}_2) , s^{(2)})\\
&=& \sum_{\substack{(t_1, s_1)\;,\; (t_2)}} (t^{(1)}_1 \star  t^{(1)}_2 , s^{(1)})   \otimes (t^{(2)}_1 \star t^{(2)}_2 , s^{(2)}).
\end{eqnarray*}
We use the notation: $(\chi \otimes I \otimes I) \circ (I \otimes \Gamma) = \chi \otimes \Gamma$. 
\begin{eqnarray*}
(\alpha\otimes \alpha) \circ \tau^{23}\circ (\chi \otimes \Gamma) \big((t_1, s_1)\otimes t_2\big)
&=& (\alpha\otimes \alpha) \big(\sum_{\substack{(t_1, s_1)\;,\; (t_2)}} (t^{(1)}_1, s^{(1)}_1)\otimes  t^{(1)}_2 \otimes (t^{(2)}_1, s^{(2)}_1) \otimes  t^{(2)}_2\big)\\ 
 &=& \sum_{\substack{(t_1, s_1)\;,\; (t_2)}} \alpha\big((t^{(1)}_1, s^{(1)}_1)\otimes  t^{(1)}_2 \big)\otimes \alpha\big((t^{(2)}_1, s^{(2)}_1) \otimes  t^{(2)}_2\big)\\ 
&=& \sum_{\substack{(t_1, s_1)\;,\; (t_2)}} (t^{(1)}_1 \star  t^{(1)}_2 , s^{(1)})   \otimes (t^{(2)}_1 \star t^{(2)}_2 , s^{(2)}),
\end{eqnarray*}
which proves the theorem.
\end{proof}
%%%%%%%%%%%%%%%%%%%%%%%%%%%%%%%%%%%%%%%%%%%%%%%%%%%%%%%%%%%%%%%%%%%
%%%%%%%%%%%%%%%%%%%%%%%%%%%%%%%%%%%%%%%%%%%%%%%%%%%%%%%%%%%%%%%%%%%
%%%%%%%%%%%%%%%%%%%%%%%%%%%%%%%%%%%%%%%%%%%%%%%%%%%%%%%%%%%%%%%%%%%
%%%%%%%%%%%%%%%%%%%%%%%%%%%%%%%%%%%%%%%%%%%%%%%%%%%%%%%%%%%%%%%%%%%

%%%%%%%%%%%%%%%%%%%%%%%%%%%%%%%%%%%%%%%%%%%%%%%%%%%%%%%%%%%%%%%%%%%
%%%%%%%%%%%%%%%%%%%%%%%%%%%%%%%%%%%%%%%%%%%%%%%%%%%%%%%%%%%%%%%%%%%
%%%%%%%%%%%%%%%%%%%%%%%%%%%%%%%%%%%%%%%%%%%%%%%%%%%%%%%%%%%%%%%%%%%
%%%%%%%%%%%%%%%%%%%%%%%%%%%%%%%%%%%%%%%%%%%%%%%%%%%%%%%%%%%%%%%%%%%
%%%%%%%%%%%%%%%%%%%%%%%%%%%%%%%%%%%%%%%%%%%%%%%%%%%%%%%%%%%%%%%%%%%
%%%%%%%%%%%%%%%%%%%%%%%%%%%%%%%%%%%%%%%%%%%%%%%%%%%%%%%%%%%%%%%%%%%
%%%%%%%%%%%%%%%%%%%%%%%%%%%%%%%%%%%%%%%%%%%%%%%%%%%%%%%%%%%%%%%%%%%

\end{document}